\newtheorem{proposition}{Proposition}[section]
\theoremstyle{definition}
\newtheorem{theorem}{Theorem}[section]%
\newtheorem{corollary}[proposition]{Corollary}
\newtheorem{lemma}[proposition]{Lemma}
\theoremstyle{remark}
\newtheorem{remark}{Remark}
\newcommand{\mL}{\mathcal L}
\newcommand{\mF}{\mathcal {F}}
\newcommand{\mP}{\mathcal {P}}
\newcommand{\C}{\mathcal {C}}
\newcommand{\D}{\mathcal {D}}
\newcommand{\G}{\mathcal {G}}
\newcommand{\vp}{\varphi}
\newcommand{\PP}{\mathbb {P}}
\newcommand{\NN}{\mathbb {N}}
\newcommand{\eps}{\varepsilon}
\DeclareMathOperator{\esssup}{ess sup}
\DeclareMathOperator{\leb}{\lambda}
\DeclareMathOperator{\diff}{Diff}
\begin{document}
\title{Quenched decay of correlations for nonuniformly hyperbolic random maps with an ergodic driving system}
\author{Jos\'e F. Alves}
\address{Centro de Matem\'atica da Universidade do Porto, Rua do Campo Alegre 687, 4169-007 Porto, Portugal}
\email{jfalves@fc.up.pt}

\author{Wael Bahsoun}

\address{Department of Mathematical Sciences, Loughborough University,
Loughborough, Leicestershire, LE11 3TU, UK}
\email{W.Bahsoun@lboro.ac.uk}

\author{Marks Ruziboev}
\address{Faculty of Mathematics, University of Vienna, Oskar Morgensternplatz 1, 1090 Vienna, Austria}
\email{marks.ruziboev@univie.ac.at}
\author{Paulo Varandas}
\address{Centro de Matem\'atica da Universidade do Porto, Rua do Campo Alegre 687, 4169-007 Porto, Portugal}
\email{pcvarand@gmail.com}

\date{\today}
\thanks{The research of W. Bahsoun (WB) is supported by EPSRC grant EP/V053493/1. WB would like to thank the hospitality of the University of Vienna where part of this work was carried. M. Ruziboev (MR) research is supported by the Austrian Science Fund (FWF): M2816 Meitner Grant. MR would like to thank the hospitality of Loughborough University where this project was initiated. Both WB and MR would like to thank Henk Bruin for useful discussions and would like to thank the University of Porto for its hospitality during their visit to J.F. Alves and P. Varandas. J.F. Alves was partially supported by CMUP (UID/MAT/00144/2019),
PTDC/MAT-PUR/4048/2021 and PTDC/MAT-PUR/28177/2017, which are funded
by FCT (Portugal) with national (MEC) and European structural funds
through the program  FEDER, under the partnership agreement PT2020. The authors are grateful for anonymous reviewer for careful reading of the paper and for useful comments and suggestions.}
\keywords{Random dynamical systems, hyperbolic dynamics, quenched decay of correlations}
%\subjclass{Primary 37A05, 37E05}
\begin{abstract} 
In this article we study random tower maps driven by an \emph{ergodic} automorphism. We prove quenched exponential correlations decay for tower maps admitting exponential tails. Our technique is based on constructing suitable cones of functions, defined on 
the random towers, which contract with respect to the Hilbert metric under the action of appropriate transfer operators. We apply our results to obtain quenched exponential correlations decay for several \emph{non-iid} random dynamical systems including small random perturbations of Lorenz maps and Axiom A attractors.   
\end{abstract}
\maketitle
\markboth{J. F. Alves, W. Bahsoun, M. Ruziboev, P. Varandas}{Quenched decay of correlations for non-uniformly hyperbolic maps}

%\tableofcontents

\section{Introduction}
Tower extensions are flexible tools that are used to study important statistical  properties, such as the rate of the correlation decay, for non-uniformly hyperbolic systems. For deterministic dynamical systems\footnote{For general references on the basics of random dynamical systems and their ergodic theory we refer the reader to \cite{A98,K,LQ95}.} such tools are known as Young towers after the pioneering work of L-S Young \cite{Y98,Y99}. For random dynamical systems they are called random Young towers and they were first studied in \cite{BBMD} for independent identically distributed (iid) non uniformly expanding random systems that mix exponentially. Recently in \cite{BBR19} such random towers were developed for non-uniformly expanding iid random systems that mix polynomially. More recently, random towers for small random perturbations of iid partially hyperbolic attractors were constructed in \cite{ABR22}. In fact, the iid requirement in \cite{ABR22} is only needed to exploit the results of \cite{BBMD, BBR19} when quotienting the random hyperbolic tower to an expanding one. Research on statistical properties for random dynamical systems has been one of the most active directions in ergodic theory, see for instance \cite{ABR22,AFGV,Bu,DFGV19,DH20, HKRVZ22, NPT21, SSV21, SVZ}; in particular, on probabilistic limit theorems \cite{H22,Su22} for systems admitting the random tower extensions studied in \cite{ABR22, BBR19, BBMD}. 

In this article we drop the iid assumption in \cite{ABR22, BBMD} and study, for the first time, random tower maps driven by an \emph{ergodic} automorphism. Under the assumption that the tower maps have exponential tails, we show that the associated random dynamical systems admit quenched exponential decay of correlations. Unlike \cite{BBMD, BBR19} where coupling techniques that exploited the iid property in a non-trivial way were employed\footnote{See for instance Section 7 in \cite{BBR19} where iid is explicitly used.}, in this paper our technique is based on constructing suitable cones of functions defined on random towers. We show that such cones contract, with respect to the Hilbert metric, under the action of appropriate random transfer operators associated with the random tower maps. In contrast with the deterministic setting\footnote{For more information on this technique in the determinstic setting, we refer the reader to the work of Liverani \cite{Liv1, Liv2} where it was applied to Anosov systems, and to piecewise uniformly expanding maps of the interval. While the reference \cite{MD01} includes the first application of this technique to deterministic Young towers.} \cite{MD01} where the exponential decay of correlation becomes a consequence, in the random setting proving that iterates of the transfer operators, acting on observables in the cones, converge exponentially fast to the equivariant family of densities requires some effort\footnote{This is not an unexpected difficulty. See the pioneering work of \cite{Bu} where the Birkhoff cone technique was applied for expanding on average non-iid random Lasota-Yorke maps. In particular, see Lemma 3.4 of \cite{Bu}.}. This is due to the fact that in the random setting the sequence of instants at which one observes a contraction in the Hilbert metric is determined by a random variable (see Lemma \ref{lem:mix} and Proposition~\ref{prop:key}). To overcome this hurdle, we prove that such `good instances' of contraction are not sparse along the orbits of the base map, thanks to its ergodicity (see Lemmas \ref{lem:erg} and \ref{lem:erg2}). This enables us to get quenched exponential decay rates for observables in the cone, and then pass such rates H\"older observables on the tower. We apply our results and prove quenched exponential correlations decay for several \emph{non-iid} random dynamical systems including small random perturbations of Lorenz maps and Axiom A attractors. 

The paper is organised as follows. Section \ref{sec:set} includes the setup for a random tower with ergodic driving. Moreover, it includes the statements of the main results of the paper, Theorem \ref{thm:main0} and Theorem \ref{thm:main1}. Section \ref{sec:app} is devoted for applications of Theorem \ref{thm:main0} and Theorem \ref{thm:main1} to small random perturbations of Lorenz maps and Axiom A attractors. Finally, in Section \ref{sec:proof1} we provide a proof for Theorem \ref{thm:main0} and in section \ref{sec:proof2} we provide a proof for Theorem \ref{thm:main1}.
\section{The set up}\label{sec:set}
A random dynamical system on a measure space $(X, \leb)$  over a measure preserving system $(\Omega, \PP, \sigma)$ is described by a skew product
\begin{equation}\label{eq:skew1}
S:\Omega\times X\to \Omega\times X, \quad (\omega, x)\mapsto (\sigma\omega, f_\omega(x)). 
\end{equation}
Here we aim to study dynamics of $f_\omega:X\to X$, for $\PP$-almost every $\omega\in\Omega$. We assume that $\sigma: \Omega \to \Omega$ is an \emph{ergodic} automorphism of a probability space $(\Omega, \PP)$ and $(\omega, x)\mapsto f_{\omega}(x)$ is a measurable function on $\Omega\times X$. The random orbits are obtained by iteration of the maps
\begin{equation}\label{eq:randomor}
f^{n}_\omega=f_{\sigma^{n-1}\omega}\circ \cdots\circ f_{\sigma\omega}\circ f_\omega, \qquad n\geq 1.
\end{equation}
The abstract framework in this work is called a random tower map. It is defined as follows: let  $\{\Lambda_\omega\}_{\omega\in\Omega}$ be a measurable family, in the sense of \cite[Section~3]{CF94}, of measurable subsets of $X$ and assume $\leb_\omega(\Lambda_\omega)=1$ for almost every $\omega\in\Omega$. Here $\leb_\omega$ denotes the restriction of the reference measure on $X$ to the subset $\Lambda_\omega$. 

We say that 
the random dynamical system admits a random tower   
if for $\PP$-almost every $\omega\in\Omega$  there exists a countable partition $\{\Lambda^j(\omega)\}_j$ of $\Lambda_\omega$ and a measurable return time function $R_\omega:\Lambda_\omega\to \mathbb N$ that is constant on each $\Lambda^j(\omega)$ and if $R_\omega|\Lambda^j(\omega)=n$ then   $f^{n}_\omega(x)\in\Lambda_{\sigma^n\omega}$ for $\PP$-almost every $\omega\in\Omega$ and $\leb_\omega$ -almost every $x\in\Lambda_\omega.$ 
The tower 
$$
\Delta_\omega = \Big\{ (x,\ell) \in X \times \mathbb Z_+ \colon x\in \bigcup_{j} \Lambda^j(\sigma^{-\ell}\omega)\; 
\text{and}\; 0\le \ell \le R_{\sigma^{-\ell} \omega}(x)-1 \Big\}
$$ 
at $\omega\in\Omega$ is formed by a collection of floors $\Delta_{\omega, \ell}$, $\ell\ge 0$, where $\Delta_{\omega, 0}=\Lambda_\omega\times \{0\}$ and 
\begin{equation*}
\Delta_{\omega, \ell}=\left\{(x, \ell)\mid x \in \Lambda_{\sigma^{-\ell}\omega},   R_{\sigma^{-\ell}\omega}(x)> \ell\right \} \text{ for } \ell\ge 1.
\end{equation*} 
The random tower map $F_\omega: \Delta_\omega\to \Delta_{\sigma\omega}$ is given by
\begin{equation}
F_\omega(x, \ell)=\begin{cases} 
(x, \ell+1), \quad\text{if}\quad \ell+1<R_{\sigma^{-\ell}\omega}(x) \\
(f^{\ell+1}_{\sigma^{-\ell}\omega}x, 0), \quad\text{if}\quad \ell+1=R_{\sigma^{-\ell}\omega}(x).
\end{cases}
\end{equation}
We assume that $F_\omega$ is non-singular with respect to $\lambda_\omega$. The collection $\Delta=\{\Delta_{\omega}\}_{\omega\in\Omega}$ is called a random tower.  The  fibered system $\{F_\omega\}_{\omega\in\Omega}$ on $\Delta$ is called the \emph{random tower map}.

Notice that the partition $\{\Lambda^j(\sigma^{-k}\omega)\}$, $k\ge 1$, induces a partition on the $k$th level of the tower $\Delta_\omega$. Thus, we have a partition $\mathcal P_\omega=\{\Delta_{\omega, \ell}^j\}$  on every  $\Delta_\omega$ induced by $\{\Lambda^j(\sigma^{-k}\omega)\}_j$, $k\ge 1$.   
For $(x,\ell) \in \Delta_\omega$, let $\hat R_\omega$ be an extension of $R_\omega$ to  the tower $\Delta_{\omega}$ defined as 
\begin{equation}\label{eq:hatR}
\hat R_\omega(x, \ell)=R_{\sigma^{-\ell}\omega}(x)\quad\text{and}\quad \hat R_{\omega, \ell}^j=\hat R_\omega|\Delta_{\omega, \ell}^j.
\end{equation}
The reference measure $\leb_\omega$ and the $\sigma$-algebra on $\Lambda_\omega$ naturally lifts to $\Delta_\omega$ and by abuse of notation we denote it by $\leb_\omega$. The lifted $\sigma$-algebra will be denoted by $\mathcal B_\omega$. 
For $z\in\Delta_\omega$, let 
\begin{equation}
    R_1(\omega, z)=\hat R_\omega(z)\text{ and }R_n(\omega, z)=  R_1 (\sigma^{R_{n-1}}\omega, F^{R_{n-1}}_\omega(z))+ R_{n-1}(\omega, z) \text{ for }n\ge2.
\end{equation} 

\medskip
Next we define the separation time $s:\Delta\times\Delta\to \mathbb Z_+\cup\{\infty\}$ for $\PP$-almost every $\omega$ by setting $s((\omega_1, z_1), (\omega_2, z_2)) =0$ {if} $\omega_1\neq\omega_2$ (i.e. $z_1$ and $z_2$  are in different fibers) and, if $\omega_1=\omega_2=\omega$ and $z_1, z_2\in\Delta_{\omega, 0}$ then we set $s((\omega, z_1), (\omega, z_2)) =s_\omega(z_1, z_2)$ to be the largest $n\ge 0$ such that $F_\omega^n(z_1)$ and $F_\omega^n(z_2)$  belong to the same element of $\mathcal P_{\sigma^n\omega}$. One can check that $d((\omega_1, z_1), (\omega_2, z_2))=\gamma^{s((\omega_1, z_1), (\omega_2, z_2))}$ is a distance. When $\omega_1=\omega_2$ we also use the notation $d_\omega(z_1, z_2)=\gamma^{s_\omega(z_1, z_2)}$ and $F_\omega^{-1}=(F_\omega)^{-1}$.

We assume that the random tower satisfies the following properties. 
\begin{itemize}
%\item[(P0)]\textbf{Stopping times} Should we say here something? 
\item[(P1)] \textbf{Markov}:  $F_\omega|\Delta_{\omega, \ell}^j:\Delta_{\omega, \ell}^j\to \Delta_{\sigma\omega, 0}$ is a measurable bijection whenever $\Delta_{\omega, \ell}^j=\Delta_{\omega, \ell}\cap F^{-1}_\omega(\Delta_{\sigma\omega, 0})$ is non-empty;
\item[(P2)] \textbf{Bounded distortion}: There are constants $D>0$ and $0<\gamma<1$ such that for $\PP$-almost every $\omega$ and each $\Delta_{\omega, 0}^j$  the corresponding Jacobian $JF_\omega$ is positive and   satisfies 
\begin{align}
%&JF_\omega=1 \text{ on } \Delta_\omega\setminus F^{-1}_\omega\Delta_{\sigma\omega, 0}; \\
\label{bddd}
&\left|\frac{JF_\omega(x)}{JF_\omega(y)}-1\right|\le D\gamma^{s_\omega(F_\omega(x), F_\omega(y))}, \text{ for all  } x,y\in \Delta_{\omega, 0}^j;
\end{align}
\item[(P3)] \textbf{Weak expansion}: $\mathcal P_\omega$ is a generating partition for $F_\omega$ i.e. diameters of the partitions $\vee_{j=0}^n F^{-j}_{\omega}\mathcal P_{\sigma^{j}\omega}$ converge to zero as $n$ tends to infinity;
%\item[(P4)] \textbf{Finiteness}:  There exists an $M>0$ such that   $\int m(\Delta_\omega)d\PP(\omega)\le M$;
\item[(P4)] \textbf{Aperiodicity}: There are $N\in\mathbb N$ and 
$\{t_i\in\mathbb Z_+\mid i=1, 2, ..., N\}$   such that g.c.d.$\{t_i\}=1$ and $\epsilon_i >0$ so that
for $\PP$-almost every $\omega \in \Omega$ and $i = 1, 2, \dots, N$ we have
$\leb_\omega\{x\in\Lambda_\omega\mid R_\omega(x)=t_i\}>\epsilon_i$.
\item[(P5)] \textbf{Return time asymptotics}: There are constants $C, \theta>0$ such that 
 $$\leb_\omega\{x\in\Delta_\omega\mid R_\omega(x)>n\}\le C e^{-\theta n}.$$
\end{itemize}

\subsection{Statement of the main results}
This subsection is devoted to the statement of the main results. In order to do so, we need to recall some notations and concepts.
Recall that  $\mathcal B_\omega$ is the  Borel $\sigma$-algebra on $\Delta_\omega$. We consider also the space of Lipschitz continuous observables, defined as follows.
%\subsection{Function spaces}
Let $0<\gamma< 1$ be as in (P2) above. 
For each $\vp:\Delta\to \mathbb R$  define 
\[
 |\vp_\omega|_\infty =\sup_{x\in\Delta_\omega}|\vp_\omega(x)|, \quad
 |\vp_\omega|_h=\sup_{\ell, j}\sup_{x,y\in\Delta_{\omega, \ell}^j}\frac{|\vp_\omega(x)-\vp_\omega(y)|}{d_\omega(x, y)},  
\]
where $\phi_\omega$ is the restriction of $\phi$ to $\Delta_\omega$.
Let 
$$\|\vp_\omega\|=|\vp_\omega|_h+|\vp_\omega|_\infty.$$ Consider the spaces 
$$L^\infty=\{\vp:\Delta\to \mathbb R\mid\,\, \esssup_{\omega\in\Omega}|\vp_\omega|<+\infty\}$$
and
$$\mF=\{\vp:\Delta\to \mathbb R\mid\,\, \esssup_{\omega\in\Omega}\|\vp_\omega\|<+\infty\}.$$

The dynamics of a random tower map $\{F_\omega\}_\omega$ over a measure preserving system $(\Omega, \PP, \sigma)$ can be modeled by the skew product
\begin{equation}\label{eq:skew}
F: \mathbb \Delta\to \mathbb \Delta, \quad\text{given by } F(\omega, x) = (\sigma\omega, F_\omega(x)), 
\end{equation}
where 
$$\mathbb \Delta=\{(\omega, x)\mid \omega\in\Omega, x\in\Delta_\omega\}.$$

Let ${\mathcal M}_{\hat S}(\Omega\times X)$ denote the space of \emph{random probability measures} $\mu$ on 
$\Omega\times X$ whose marginal on $\Omega$ coincides with $\PP$ (this set is always non-empty, cf. Corollary 6.13 in \cite{Cra02}). In particular, if $\mu \in {\mathcal M}_{\hat S}(\Omega\times X)$ then
\begin{enumerate}
\item[(a)] $\mu$ admits a disintegration
$
d\mu(\omega,x)=d\mu_\omega(x)d\PP(\omega),
$
where $\mu_\omega$ are called the \emph{sample measures} of $\mu$;
\item[(b)] $\omega \mapsto \mu_\omega$ is measurable;
\item[(c)]  the family $\{\mu_\omega\}_\omega$ is \emph{equivariant}: $(F_\omega)_*\mu_\omega=\mu_{\sigma\omega}$
for $\PP$-almost every $\omega\in \Omega$.
\end{enumerate}
We abuse notation and will write  $\mu=\{\mu_\omega\}_\omega$ to denote a random probability measure.
Our first result is that random tower maps admit a unique fibrewise mixing absolutely continuous random probability measures.

\begin{theorem}\label{thm:main0} 
Assume that $\{F_\omega\}_{\omega\in\Omega}$ is a random tower map on $\Delta$ satisfying (P1)-(P5).
There exists a unique random probability measure  $\mu=\{\mu_\omega\}_\omega$, $\omega\to\mu_\omega$ measurable, 
such that $\mu_\omega\ll \leb_\omega$. Moreover, there exists $C>0$ (independent of $\omega$) such that $1/C<\frac{d\mu_\omega}{d\leb_\omega}<C$
for $\PP$-almost every $\omega\in \Omega$. Furthermore, this family is fibrewise mixing:
\begin{equation}\label{eq:fibmix}
\lim_{k\to\infty}\left|\mu_\omega(A_\omega\cap F^{-k}_\omega(A'_{\sigma^k\omega})) -\mu_\omega(A_\omega)\mu_{\sigma^k\omega}(A'_{\sigma^k\omega})\right|=0
\end{equation}
for any $A_\omega\in\mathcal B_\omega$ and  $A_{\sigma^k\omega}'\in\mathcal B_{\sigma^k\omega}$.
\end{theorem}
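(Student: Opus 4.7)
The plan is to build the equivariant family $\{\mu_\omega\}$ via a random cone contraction applied to the random transfer operators, and then deduce fibrewise mixing by approximating indicator functions by cone elements. I would first introduce the random transfer operators
\begin{equation*}
(\mathcal{L}_\omega \phi)(y) \;=\; \sum_{F_\omega(x)=y}\frac{\phi(x)}{JF_\omega(x)}, \qquad \mathcal{L}^n_\omega \;=\; \mathcal{L}_{\sigma^{n-1}\omega}\circ\cdots\circ \mathcal{L}_\omega,
\end{equation*}
so that an equivariant sample measure $\mu_\omega = h_\omega\, d\leb_\omega$ corresponds to a positive density with $\int h_\omega\, d\leb_\omega = 1$ and $\mathcal{L}_\omega h_\omega = h_{\sigma\omega}$.

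Next I would fix, for suitable constants $a,\beta>0$ with $\beta<\theta$, a cone $\mathcal{C}_\omega$ of strictly positive functions on $\Delta_\omega$ whose logarithms are Lipschitz on each partition element of $\mathcal{P}_\omega$ in the $\gamma$-metric with constant $a$ and whose floor-to-floor values are controlled by $e^{\beta \ell}$. Using (P1) and (P2) one verifies the invariance $\mathcal{L}_\omega \mathcal{C}_\omega \subset \mathcal{C}_{\sigma\omega}$ for an appropriate choice of the cone parameters; using (P4) and (P5) one further shows that the images are uniformly bounded away from zero on the base $\Delta_{\sigma\omega, 0}$. The Markov property (P1) and the aperiodicity assumption (P4) then imply that at the "full-return" instants, i.e., iterates $n$ at which an element of the refined partition maps bijectively onto $\Delta_{\sigma^n\omega, 0}$, the image $\mathcal{L}^n_\omega\mathcal{C}_\omega$ sits inside a subcone of finite Hilbert diameter in $\mathcal{C}_{\sigma^n\omega}$, hence Birkhoff's theorem on the Hilbert metric produces a strict contraction.

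For existence, I would apply the ergodic theorem to $(\Omega,\PP,\sigma)$ together with (P4), (P5) to show that these good instants occur with positive density along $\PP$-a.e. orbit $\{\sigma^k\omega\}_k$. Combined with the finite-diameter contraction, this gives that $\mathcal{L}^n_{\sigma^{-n}\omega}\mathbf{1}/\|\mathcal{L}^n_{\sigma^{-n}\omega}\mathbf{1}\|_{L^1}$ is projectively Cauchy; its limit $h_\omega$ lies in the cone and, since $\mathcal{L}$ is an $L^1$-contraction, satisfies $\mathcal{L}_\omega h_\omega = h_{\sigma\omega}$. Uniqueness of $h_\omega$ in $L^1(\leb_\omega)$ follows from the same cone contraction argument, after verifying that any other invariant density may be enlarged into the cone by adding a small multiple of $h_\omega$; the uniform two-sided bounds $1/C \le h_\omega \le C$ are read directly from the cone parameters, which depend only on the universal constants $D,\gamma,\theta,N$ and $\{\epsilon_i, t_i\}$ appearing in (P2)--(P5).

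The main obstacle is fibrewise mixing. Given $A_\omega\in\mathcal{B}_\omega$ and $A'_{\sigma^k\omega}\in\mathcal{B}_{\sigma^k\omega}$, the left-hand side of \eqref{eq:fibmix} rewrites via duality as
\begin{equation*}
\int_{A'_{\sigma^k\omega}} \!\mathcal{L}^k_\omega\bigl(h_\omega \mathbf{1}_{A_\omega}\bigr)\, d\leb_{\sigma^k\omega} \;-\; \mu_\omega(A_\omega)\int_{A'_{\sigma^k\omega}} \!h_{\sigma^k\omega}\, d\leb_{\sigma^k\omega},
\end{equation*}
so it suffices to show that $\mathcal{L}^k_\omega\phi / \int \phi\, d\leb_\omega \to h_{\sigma^k\omega}$ in $L^1(\leb_{\sigma^k\omega})$ as $k\to\infty$. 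This is immediate for $\phi\in\mathcal{C}_\omega$ from the Hilbert-metric contraction at good instants; but $\phi = h_\omega \mathbf{1}_{A_\omega}$ is not in the cone, so I would approximate $\mathbf{1}_{A_\omega}$ by Lipschitz functions and pass the approximation error through the $L^1$-contraction of $\mathcal{L}_\omega$. The delicate point, emphasized by the authors in the introduction, is that the good contraction instants form a random subsequence whose density along the orbit is only guaranteed by ergodicity of $\sigma$; one must combine the projective contraction at these times with non-expansion in between and use ergodicity to conclude that convergence holds along the full sequence of integers $k$, not merely along the sparse random subsequence of return times.
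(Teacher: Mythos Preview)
Your approach is viable in outline but takes a genuinely different route from the paper. The paper proves Theorem~\ref{thm:main0} by a soft Krylov--Bogoliubov argument on the skew product: it pushes forward the reference measure on the base $\mathbb\Delta_0$ under $F$, shows via bounded distortion (P2) that the Ces\`aro averages $\frac1n\sum_{j<n}F^j_*\mu^0$ have uniformly bounded densities above and below (equations \eqref{phiomegajA}--\eqref{low:phiomegajA}), extracts a weak-$*$ accumulation point $\mu$, and then applies Rohlin's disintegration theorem to obtain a \emph{measurable} family $\{\mu_\omega\}$. The two-sided density bounds and uniqueness are then imported from \cite{AV13} and \cite{H22}, and fibrewise mixing is quoted verbatim from \cite[Lemma~4.2]{BBMD}. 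In particular no cones appear in the proof of Theorem~\ref{thm:main0}; the Birkhoff-cone machinery is reserved entirely for Theorem~\ref{thm:main1}, and the cone $\mathcal C^\omega_{a,b,c}$ used there is defined \emph{in terms of} the measures $\mu_\omega$ and relies on the fibrewise mixing already established (see Lemma~\ref{lem:mix} and Proposition~\ref{prop:key}). Your route---an intrinsic log-Lipschitz cone that does not reference $\mu_\omega$, producing $h_\omega$ as a projective limit of $\mathcal L^n_{\sigma^{-n}\omega}\mathbf 1$ and deducing mixing by density of Lipschitz functions---is more self-contained and avoids the external citations, at the cost of front-loading the analysis of Section~\ref{sec:proof2} into the existence proof. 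Two points require care if you pursue it: first, measurability of $\omega\mapsto h_\omega$ is exactly what the authors flag as the missing ingredient in the literature (Remark~1), so you must argue explicitly that your projective limit inherits measurability from the approximants; second, the uniform lower bound $h_\omega\ge 1/C$ is not ``read directly from the cone parameters''---log-Lipschitz regularity is only within partition elements and the floor-growth condition gives only upper control, so the lower bound needs a separate argument from equivariance $h_\omega(x,\ell)=h_{\sigma^{-\ell}\omega}(x,0)$ together with (P4) and (P5).
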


\begin{remark} It is essential for our decay of correlation technique to obtain measures with, $1/C<\frac{d\mu_\omega}{d\leb_\omega}<C$ for $C>0$ independent of $\omega$. Although there are many results in the literature showing that such a family exists and it satisfies the above boundedness property, the measurability $\omega\to\mu_\omega$ is missing from the literature. Such a property is required to consider integrals of the form $\int_\Omega\mu_\omega(A) dP(\omega)$, which are in turn needed to obtain the fibrewise mixing property. Below in Section \ref{sec:proof1} we provide a proof showing that $\omega\to\mu_\omega$ is indeed measurable. 
\end{remark}
The main result of the paper is the following statement concerning the `operational' quenched exponential decay of correlations:

\begin{theorem}\label{thm:main1} %For $\PP$-almost every $\omega \in \Omega$ there is a family $\{\mu_{\omega}\}$ which is equivariant and absolutely continuous with respect to $\leb_\omega$. 
Assume that $\{F_\omega\}_{\omega\in\Omega}$ is a random tower map on $\Delta$ satisfying (P1)-(P5), and
let $\mu=\{\mu_\omega\}$ be the unique random probability measure whose sample measures are absolutely continuous with respect to $\lambda_\omega$. There are $\beta\in(0,1)$ and $C_\omega>0$, finite for $\PP$-almost every $\omega$, such that for every $\psi=(\psi_\omega)_\omega \in L^{\infty}$, 
$\vp_\omega \in \mF$ and $n\ge 1$,
$$\left| \int_{\Delta_{\omega}} (\psi_{\sigma^n\omega}\circ F^{n}_\omega)\vp_\omega d\leb_{\omega}-\int_{\Delta_{\sigma^n\omega}} \psi_{\sigma^n\omega} d\mu_{\sigma^n\omega}\int_{\Delta_\omega} \vp_\omega d\leb_\omega\right| \le C_\omega\beta^n\|\vp_\omega\|_{\mF}\|\psi_{\sigma^n\omega}\|_\infty.$$
\end{theorem}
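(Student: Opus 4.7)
\medskip

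\noindent\textbf{Proof plan for Theorem \ref{thm:main1}.}

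The starting point is to pass from a correlation estimate to an $L^1$ estimate on iterated transfer operators. Define the fibrewise transfer operator $\mL_\omega\colon L^1(\Delta_\omega,\leb_\omega)\to L^1(\Delta_{\sigma\omega},\leb_{\sigma\omega})$ by the duality relation
$$
\int_{\Delta_{\sigma\omega}}\psi\cdot \mL_\omega\vp\, d\leb_{\sigma\omega}=\int_{\Delta_\omega}(\psi\circ F_\omega)\vp\, d\leb_\omega,
$$
so that $\mL_\omega^n:=\mL_{\sigma^{n-1}\omega}\circ\cdots\circ \mL_\omega$ satisfies the cocycle property and $\mL_\omega h_\omega=h_{\sigma\omega}$ for the equivariant density $h_\omega=d\mu_\omega/d\leb_\omega$ provided by Theorem~\ref{thm:main0}. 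The integrand on the left of the target inequality equals $\int\psi_{\sigma^n\omega}\cdot\mL_\omega^n\vp_\omega\,d\leb_{\sigma^n\omega}$, and subtracting the product of integrals produces the error $\int\psi_{\sigma^n\omega}\bigl(\mL_\omega^n\vp_\omega-h_{\sigma^n\omega}\!\int\vp_\omega d\leb_\omega\bigr)\,d\leb_{\sigma^n\omega}$. Thus it suffices to show exponential decay of $\|\mL_\omega^n\vp_\omega-h_{\sigma^n\omega}\!\int\vp_\omega d\leb_\omega\|_{L^1}$ with a rate $\beta^n$ and a fibrewise constant $C_\omega$.

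The plan is to execute this via Birkhoff cones. Introduce, on each $\Delta_\omega$, a cone $\mathcal C_{a,b}(\omega)$ of strictly positive functions whose log is Lipschitz with constant $a$ along the separation-time metric $d_\omega$ inside each partition element, together with an upper bound $b$ on their $L^1$ norm relative to $\inf$. Using the bounded distortion property (P2) one checks that for every $\omega$, $\mL_\omega\mathcal C_{a,b}(\omega)\subset \mathcal C_{a',b'}(\sigma\omega)$ for appropriate constants, so the cones are forward-invariant. The decisive step, which the introduction advertises as \textit{Lemma \ref{lem:mix}} and \textit{Proposition \ref{prop:key}}, is to prove that after a uniformly chosen block of iterates tied to the aperiodicity hypothesis (P4) and the return-time structure, $\mL_\omega^{N(\omega)}$ maps $\mathcal C_{a,b}(\omega)$ into a sub-cone of \emph{finite} Hilbert diameter of $\mathcal C_{a,b}(\sigma^{N(\omega)}\omega)$. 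The instances at which such contraction occurs are determined by a stopping time depending on the orbit of $\omega$; on each such occurrence one gains a definite factor $\theta<1$ in the projective metric by Birkhoff's theorem on contraction of Hilbert metric.

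The second central point is to convert these \emph{random} occurrences of contraction into an \emph{exponential-in-}$n$ gain. Because the stopping time driving the contraction is finite $\PP$-a.s., the Birkhoff ergodic theorem applied to the indicator of good returns—this is the role of the announced \textit{Lemmas \ref{lem:erg}} and \textit{\ref{lem:erg2}}—guarantees that the number of contractions up to time $n$ grows linearly in $n$, say as $\alpha n+o(n)$ for some $\alpha>0$, for $\PP$-a.e. $\omega$. The fibrewise deviation $C_\omega$ absorbs the $o(n)$-term via the maximal ergodic inequality or Kac-type bounds coming from the exponential tail (P5); this is where ergodicity (as opposed to iid-ness) is substituted for the coupling arguments of \cite{BBMD, BBR19}. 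Combining with the contraction factor gives a Hilbert-metric estimate $\Theta_{\mathcal C}\bigl(\mL_\omega^n\vp,\mL_\omega^n h_\omega\bigr)\le K_\omega\beta^n$ for $\vp\in\mathcal C_{a,b}(\omega)$, which the standard inequality relating Hilbert distance to $L^1$ distance (via the equivalent $\|\cdot\|$ norm on normalized cone elements) converts into exponential $L^1$ decay.

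Finally one passes from cone elements to arbitrary $\vp_\omega\in\mF$ by the usual two-step decomposition: write $\vp_\omega=\vp_\omega^++\vp_\omega^-$ and, for each sign, add a sufficiently large multiple of $h_\omega$ to produce an element of $\mathcal C_{a,b}(\omega)$; the Lipschitz constant of this multiple can be controlled by $\|\vp_\omega\|$ because $1/C\le h_\omega\le C$. Linearity of $\mL_\omega$ and the preservation of $\int\vp_\omega d\leb_\omega$ under $\mL_\omega^n$ yield the bound with a constant proportional to $\|\vp_\omega\|_{\mF}$. Paired with $|\!\int\psi_{\sigma^n\omega}(\cdot)d\leb_{\sigma^n\omega}|\le\|\psi_{\sigma^n\omega}\|_\infty$ this produces the estimate in the theorem. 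The principal obstacle throughout is the third step: extracting a deterministic exponential rate from what are, a priori, only $\PP$-a.s.\ asymptotic densities of good returns. This is addressed by using the ergodic theorem quantitatively on the exponentially small sets $\{R_\omega>n\}$ given by (P5), absorbing the stochastic fluctuations into $C_\omega$ while keeping the rate $\beta$ deterministic.
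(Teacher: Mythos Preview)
Your high-level architecture is right and matches the paper's: reduce to $L^1$ convergence of $\mL_\omega^n\vp_\omega$ to $h_{\sigma^n\omega}\int\vp_\omega\,d\leb_\omega$, obtain Hilbert-metric contraction on a suitable cone at randomly occurring ``good'' times, use ergodicity of $\sigma$ to show these times have positive density, and absorb the fluctuations into $C_\omega$. Your identification of Lemmas~\ref{lem:erg} and~\ref{lem:erg2} as the place where ergodicity replaces the iid assumption is also correct, as is the final step of pushing a general $\vp_\omega\in\mF$ into the cone by adding a multiple of $h_\omega$.

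There is, however, a genuine gap in your choice of cone. The ``log-Lipschitz plus $L^1/\inf$'' cone you propose is the standard one for \emph{uniformly expanding} maps, but a tower map is not uniformly expanding: on levels below the top, $F_\omega(x,\ell)=(x,\ell+1)$ with Jacobian $1$, so $\mL_\omega$ acts as a pure shift there with no regularization whatsoever. With your cone the image $\mL_\omega^N\mathcal C_{a,b}(\omega)$ will \emph{not} have finite Hilbert diameter, because the neutral direction contributes no contraction and the tower has unbounded height. Bounded distortion (P2) only controls what happens at returns to the base; it gives you nothing on the way up. The claim ``$\mL_\omega\mathcal C_{a,b}(\omega)\subset\mathcal C_{a',b'}(\sigma\omega)$ by (P2)'' is therefore not justified as stated.

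The paper handles this via two devices you are missing. First, it replaces $\mL_\omega$ by a \emph{weighted} operator $P_\omega\psi:=v_{\sigma\omega}^{-1}\mL_\omega(v_\omega\psi)$, with $v_\omega(x,\ell)=e^{\ell\theta'}$ for some $\theta'<\theta$; this makes the neutral direction artificially contracting (a factor $e^{-\theta'}$ per level), while (P5) keeps the weighted reference measure $m_\omega=v_\omega\leb_\omega$ finite and $|P_\omega^n\mathbf 1|_\infty$ uniformly bounded. Second, the cone $\C_{a,b,c}^\omega$ actually used is not a log-Lipschitz cone: it carries \emph{three} conditions---an averaging bound $\tfrac{1}{\mu_\omega(A)}\int_A\psi\,dm_\omega\le a\int\psi\,dm_\omega$ over partition elements, a Lipschitz bound $|\psi|_h\le b\int\psi\,dm_\omega$, and a separate pointwise bound $|\psi(x)|\le c\int\psi\,dm_\omega$ on a ``bad'' tail set $\mathcal G_\omega^c$ of points with slow returns. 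The Lasota-Yorke inequalities (Corollary~\ref{cor:LY}) and the fibrewise mixing from Theorem~\ref{thm:main0} (through Lemma~\ref{lem:mix}) are what produce the invariance $P_\omega^k\C^\omega_{a,b,c}\subset\C^{\sigma^k\omega}_{\kappa a,\kappa b,\kappa c}$ and finite diameter in Proposition~\ref{prop:key}; neither step would go through with a plain log-Lipschitz cone on the unweighted tower.
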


\begin{remark}
We do not obtain information on the random variable $C_\omega$ appearing before the
decay rate in the above theorem. This is because the base map of the random dynamical system under consideration is only ergodic and not necessarily mixing. See subsection \ref{sec:overview} below for an elaboration.
\end{remark}

\medskip
\subsection{Overview of the proofs}\label{sec:overview} We finish this section with a discussion on the techniques used to prove Theorem \ref{thm:main1}. We also comment on the assumptions and conclusions of the main theorems.  In contrast to random induced maps, which are piecewise expanding, one of the difficulties in dealing with random tower maps is that these admit a neutral behaviour similar to a discrete flow for points which have not yet returned to the base of the tower. As mentioned in the introduction, in this paper our technique is based on constructing suitable cones $\C^\omega(a,b,c)$ of functions defined on random towers.

We show that such cones contract, with respect to the Hilbert metric, under the action of appropriate random transfer operators, $P_\omega$, associated with the random tower maps. This idea is formalized in Proposition \ref{prop:key}, where we obtain that there exists $\kappa \in (0,1)$ and constants $a,b,c>0$ so that
\[
P^{k}_\omega\C^\omega(a,b,c)\subset\C^{\sigma^k\omega}(\kappa a,\kappa b, \kappa c)
\]
for $\mathbb P$-almost every $\omega\in\Omega$ and every $k\geqslant q_1(\omega)$. One cannot expect the latter to hold for all $k\geqslant k_0$ 
uniformly in $\omega$, the reason being that the proof of the invariance of the cone depends crucially on the fibered mixing property (Theorem~\ref{thm:main0}, in particular \eqref{eq:fibmix}) which is an asymptotic object.  At this point one uses the uniform exponential tail assumption\footnote{Condition (P5) is also used in Theorem \ref{thm:main0} to obtain the uniform, in $\omega$ and $x$, lower bound on the equivariant densities, see footnote 6 later in the text. The latter characteristic of the equivariant densities is needed in the cone condition that leads to the contraction of the random transfer operator in the Hilbert metric, see  Proposition \ref{prop:key}.} (P5).  Therefore, one proves that the transfer operator is a contraction with respect to Birkhoff's projective metric, 
(cf. Lemma~\ref{lem:mix})
In order to prove contraction of the random transfer operator we write
\[
P^n_\omega=P^{n-t_{s}}_{\sigma^{t_{s}}\omega}\circ P_{\sigma^{t_{s-1}}\omega}^{t_s-t_{s-1}}\circ\dots\circ P_{\sigma^{t_1}\omega}^{t_2-t_1}\circ P_\omega^{t_1},
\]
for some well chosen non-negative integer valued random variables $t_i=t_i(\omega)$ so that the cone field $\C^\omega(a,b,c)$
is preserved by iterations of the operators $P_\omega^{t_1}$ and $P^{t_{i+1}-t_i}_{\sigma^{t_i}\omega}$ (see Lemma~\ref{lem:erg}). The asymptotic positive density of such integers $t_i(\omega)$ is guaranteed by ergodicity and a selection lemma involving suitable powers of the shift map (see Lemma~\ref{lem:erg}). Thus, one concludes that the operator $P^n_\omega$ acts as a contraction on observables on the cone $\C^\omega(a,b,c)$,  observed for integer values $n\gg t_1(\omega)$.
This is ultimately related with the quenched exponential decay of correlations in Theorem~\ref{thm:main1}, where the random variable $C_\omega$ appears intrinsically.
\section{Applications}\label{sec:app}
In this section we apply Theorems \ref{thm:main0} and \ref{thm:main1} to specific random systems. First, for the sake of completeness we show the results of Theorems \ref{thm:main0} and \ref{thm:main1} can be passed from the random towers to the original random dynamical system \eqref{eq:randomor}. Here in the applications we assume that $X$ is a manifold with differentiable structure. Let $\pi_\omega: \Delta_\omega \to X$  defined by $\pi_\omega (x,\ell) = f_{\sigma^{-\ell}\omega}^{\ell} (x)$. Note that $\pi_{\sigma \omega} \circ F_\omega = f_\omega \circ\pi_\omega $. Then it is easy to see $\nu_\omega=(\pi_{\omega})_\star\mu_\omega$ is an equivariant family of measures for $\{f_\omega\}$ and its absolute continuity follows from the fact that $f_\omega$ are non-singular. Then lifting the observables $\phi_1 \in L^\infty(X)$ and $\phi_2 \in C^\eta(X)$ to observables $\bar{\varphi}_\omega = \phi_1\circ \pi_\omega$ and $\bar{\psi}_\omega = \phi_2\circ  \pi_\omega$ on the random towers, we get: 
\begin{align}\label{eq:corrpas}
\Big| \int_X (\phi_1 \circ f_\omega^n) \phi_2 d\nu_\omega & - \int_X \phi_1 d\nu_{\sigma^n \omega} \int_X \phi_2 d\nu_\omega \Big| = \\
&\Big| \int_{\Delta_{\omega}} (\bar{\varphi}_{\sigma^n \omega} \circ F_\omega^n) \bar{\psi}_\omega h_\omega d\lambda_{\omega} - \int_{\Delta_{\sigma^n\omega}} \bar \varphi_{\sigma^n \omega} d\mu_{\sigma^n \omega} \int_{\Delta_\omega} \bar \psi_\omega h_\omega d\lambda_\omega \Big|,
\end{align}
where $h_\omega=\frac{d\mu_\omega}{d\lambda_\omega}$. Moreover, by (P2) and (P3) it is easy to verify that $\bar{\varphi}_\omega \in {L}_\infty$ and $\bar \psi_\omega h_\omega \in \mathcal{F}$. Hence, using Theorem \ref{thm:main1} and \eqref{eq:corrpas} we obtain quenched exponential decay of correlations for the original random dynamical system. We now apply this to specific examples.
\subsection{Small random perturbations of one dimensional Lorenz maps with an ergodic driving system.}
In this example we obtain quenched exponential decay of correlations for small random perturbations of one dimensional Lorenz maps with an ergodic driving system.\\
Let $X=[-\frac12, \frac12]$ and $f:X\setminus\{0\} \to X$ be $C^{1}$  with a singularity at $0$ and one-sided limits 
\mbox{$f(0^+)<0$} and $f(0^-)>0$. Assume:
\begin{itemize}
    \item There are constants $\tilde C>0$ and $\ell>0$  such that $Df^n(x)> \tilde{C} e^{n \ell}$ for all $n\ge1$ whenever $x\notin \bigcup_{j=0}^{n-1}(f^{j})^{-1}(0)$;\\
    \item There exists $C>1$ and $0<\alpha<\frac12$ such that in a one sided neighborhood of $0$ 
\begin{align*}C^{-1}|x|^{\alpha-1}\le |Df(x)|\le C|x|^{\alpha-1}, 
\end{align*}
and $1/f$ is H\"older continuous on $[-\frac12, 0]$ and $[0,  \frac12]$;
\item $f$ is transitive.
\end{itemize}
We consider  $\mathcal U$  a small neighbourhood of $f$ in a suitable $C^{1+\alpha}$ topology (see \cite{La22} for precise definition) and $\PP$ be a compactly supported  Borel probability measure whose support $\Omega$ is contained in $\mathcal U$ and $f\in\Omega$. Let $\sigma: \Omega \to \Omega$ be an \emph{ergodic} automorphism of the probability space $(\Omega, \PP)$. We then form the skew product as in \eqref{eq:skew1}
and study the random orbits as defined in \eqref{eq:randomor}. Section 3 of \cite{La22} shows that the above random system admits a random tower that satisfies (P1)-(P5). Consequently, we apply the results of Theorems \ref{thm:main0} and \ref{thm:main1} to the random system defined in \eqref{eq:randomor} to obtain quenched exponential decay of correlations. 
\subsection{Random quadratic maps with an ergodic driving system.} 
In what what follows we consider random perturbations of the full quadratic map. Let $X=[0,1]$ and $f(x)=4x(1-x)$ on $X$. Recall that $\frac34$ is a fixed point of $f$ and it has a unique preimage $x_0\in[0, \frac12]$.   Let $f_\alpha:X\to X$ be a $C^3$ map with negative Schwarzian derivative,  such that $f_\alpha(x)=f(x)$ for all $x\in[0, x_0)\cup(\frac34, 1]$ and it has a unique critical point $c_\alpha\in(x_0, \frac34)$ of order $\alpha> 1$ such that $f_\alpha(c_\alpha)=1$. Further we assume that  $f_\alpha(x_0)=f_\alpha(\frac34)=\frac34$. 

Let $(\Omega, \PP,\sigma)$ be an ergodic automorphism. Assume that there is a mapping $\alpha:\Omega\to J\subset (1, +\infty)$ be a compact interval, so that $\alpha(\omega)$ defines a map $f_\omega=f_{\alpha(\omega)}:X\to X$ with the above properties.  We then form the skew product
We then form the skew product as in \eqref{eq:skew1}
and study the random orbits as defined in \eqref{eq:randomor}. 
Next we show that the random system \eqref{eq:randomor} satisfies (P1)-(P5). Consider the deterministic sequence: $x_n=(f|_{[0,x_0)})^{-1}x_{n-1}$. Notice that $x_n\to 0$ as $n\to\infty$ and since  ${f}'(x)>2$  whenever $x\in[0,\frac14]$, $x_n$ converges to zero exponentially fast. 
Let $I_n=[x_{n}, x_{n-1})$ for $n\ge 1$ and $I_0=[x_0, \frac34]$.  Then $I_n$ has the following properties:
\begin{itemize}
\item $f^n_\omega(I_n)=[x_0, \frac34)$. 
\item There exists $C>0$ such that $|I_n|\le \frac{C}{2^{n+1}}$. 
\end{itemize}
Next we define preimages of $x_n$ under the right branch of $f$. Let $y_n=(f|_{(\frac34,1]})^{-1}x_{n-1}$ for $n\ge 1$. For convenience  we also let $y_0=\frac34$.  Again we can find $C>0$
\begin{equation}\label{eq:yn}
|y_{n}-y_{n-1}|\le \frac{C}{2^n} \text{ and } f((y_{n}, y_{n-1}])=I_{n-1}.
\end{equation} 

Finally, using the $y_n$'s and $f_\omega$ define a random partition of $I_0$ and return times as follows. Recall that $f_\omega$ has a unique critical point $c_{\alpha(\omega)}\in(x_0,\frac34)$. Thus for every $\omega\in\Omega$ let 
\[
z_n^-(\omega)=(f_\omega|_{(x_0, c_{\alpha(\omega)})})^{-1}y_n, \quad 
z_n^+(\omega)=(f_\omega|_{(c_{\alpha(\omega)}, \frac34)})^{-1}y_n
\]
Since $\alpha(\omega)$ is uniformly bounded, we there exists $\eta\in(0,1)$ and a constant $C>0$ such that 
\begin{equation}\label{eq:zn}
|z_n^-(\omega)-z_{n-1}^-(\omega)|\le \frac{C}{2^{\eta n}} \text{ and }
|z_{n-1}^+(\omega)-z_{n}^+(\omega)|\le \frac{C}{2^{\eta n}}.
\end{equation}

From the definition it is evident that $z_n^-(\omega)$ is increasing, $z_n^+(\omega)$ is decreasing and both converge to $c_{\alpha(\omega)}$  Thus, $\{z_n^-(\omega)\}\cup \{z_n^+(\omega)\}$ defines a partition on $[x_0, \frac 34]$, which allows us to define the desired return time. Let $R_\omega(x)=2$ for every $x\in [x_0, z_1^-(\omega))\cup (z_1^+(\omega),\frac34]$. For all $n\ge 1$ set 
$$R_\omega(x)=n \quad\text{for}\quad x\in [z_n^-(\omega), z_{n+1}^-(\omega))\cup (z_{n+1}^+(\omega), z_n^+(\omega)].$$ 
By construction, $f_\omega^{R_\omega}$ is full branch and for every $n\ge 2$ there are two intervals returning at time $n$. Thus, the aperiodicity assumption is satisfied automatically. Tail estimates follow from \eqref{eq:zn}. Distortion bounds follow from negative Schwarzian assumption. Finally, weak expansion assumption follows from bounded distortion and tail estimates.  Consequently, we apply the results of Theorems \ref{thm:main0} and \ref{thm:main1} to the random system defined in \eqref{eq:randomor} to obtain quenched exponential decay of correlations. 

\subsection{Small random perturbations of Axiom A with an ergodic driving system.}
In this example we apply Theorems \ref{thm:main0} and \ref{thm:main1} together with Theorem 1.6 of \cite{ABR22} to obtain quenched exponential decay of correlations for small random perturbations of Axiom A diffeomorphisms with an ergodic driving system. More precisely, let $X$ be a smooth compact Riemannian manifold of finite dimension. Denote by $\diff^{1+}(X)$ the set of $C^{1}$ diffeomorphisms whose derivative is H\"older continuous endowed with the $C^1$ topology. Let $f\in\diff^{1+}(X)$ be a topologically mixing uniformly hyperbolic Axiom A diffeomorphism \cite{BR75}. We consider  $\mathcal U$  a small neighbourhood of $f$ in the $C^1$ topology and $\PP$ a compactly supported  Borel probability measure whose support $\Omega$ is contained in $\mathcal U$ and $f\in\Omega$. Let $\sigma: \Omega \to \Omega$ be an \emph{ergodic} automorphism of the probability space $(\Omega, \PP)$. We then form the skew product as in \eqref{eq:skew1}
and study the random orbits as defined in \eqref{eq:randomor}. Theorem 1.6  and subsection 3.5 of \cite{ABR22} show that the above random system admits  a random hyperbolic tower, as defined in \cite{ABR22}, with exponential tails. Then, from this random hyperbolic tower, we obtain a quotient random tower that satisfies (P1)-(P5). Finally, we apply the results of Theorems \ref{thm:main0} and \ref{thm:main1} to the random system defined in \eqref{eq:randomor} to obtain quenched exponential decay of correlations for H\"older observables on $X$. 
\section{Existence, uniqueness and fibrewise mixing}\label{sec:proof1}
\begin{proof}[Proof of Theorem \ref{thm:main0}]
Recall the set $\mathbb \Delta$ and the skew-product $F:\mathbb \Delta\to\mathbb \Delta$ defined in \eqref{eq:skew}. Set $P=\mathbb P\times\leb_\omega$ and
define $\mu^0=P|_{\mathbb \Delta_0}$ where 
$\mathbb \Delta_0=\{(\omega, x)\mid \omega\in\Omega, x\in\Delta_{\omega,0}\}$. For $n\ge 1$, let
$\mu_n=\frac1n\sum_{j=0}^{n-1}F^n_\ast \mu^0$. It is known that $\mu_n$ admits accumulation points in the weak$^\ast$ topology, but  the accumulation points need not be probability measures. We let $\mu$ be a weak$^*$ accumulation point of $\mu_n$ and below we show that $\mu$ is indeed a probability measure that is absolutely continuous with respect to $P$.

We first prove absolute continuity. For any $j\ge 1$ set
\begin{equation*}
\mathcal P_\omega^{j}=\vee_{i=0}^{j-1}F_\omega^{-i}\mP_{\sigma^i\omega}   \quad\text{and} \quad 
\mathcal A_\omega^{j}=\{A\in \mathcal P_{\omega}^{j} \mid F^j_\omega A=\Delta_{\sigma^{j}\omega, 0}\}.
\end{equation*}
For $A_\omega\in\mP_{\omega}^{j}|\Delta_{\omega, 0}$, $A=\{(\omega, x)\mid \omega\in\Omega, x\in A_\omega\}$. Since $\PP$ is $\sigma$-invariant we have  
$$
\phi_{j, A}(\sigma^j\omega, x) =\frac{dF^j_\ast(\mu^0_{A})}{dP}=J(F_\omega^j)^{-1}(x),\quad\mu^0_{A}(B)=\mu^0(A\cap B).
$$ 
Clearly, $\phi_{j, A}$ is a density on $\mathbb\Delta$. 
Below we consider two cases depending on $A$. First, notice that if  $A_\omega\in\mathcal A_{\omega}^{j}$ then $F^j_{\omega}:A_\omega\to \Delta_{\sigma^{j}\omega,0}$ is a bijection.
For $x, y \in \Delta_{\sigma^{j}\omega,0}$, let $x', y' \in A_\omega$ be such that $F^j_{\omega}(x')=x$, and  $F^j_{\omega}(y')=y$. Then there exists $i$ so that $F^j_{\omega}= (F^{R_\omega}_{\omega})^i$. The bounded distortion condition (P2) implies that 

\begin{equation}\begin{split}\label{eq:density_reg}
\left|\log \frac{\phi_{j, A}(\sigma^{j}\omega, y)}{\phi_{j, A}(\sigma^{j}\omega, x)}\right|=
\left|\log \frac{JF^j_{\omega}(x')}{JF^j_{\omega}(y')}  \right|
\le \sum_{\ell=0}^{i-1} D\gamma^{s(x, y)+(i-\ell)-1} \le \frac{D}{1-\gamma} \gamma^{s(x, y)}.
\end{split}
\end{equation}
Therefore, there exists $D'>0$ independent of $j$, $A$ and $\omega$
such that 
$$
 \phi_{j, A}(\sigma^j\omega, y) \le D'\phi_{j, A}(\sigma^j\omega,x), \quad x, y\in  \Delta_{\sigma^j\omega, 0}.
$$
Integrating both sides  of the latter inequality over $\Delta_{\sigma^j\omega, 0}$ with respect to $x$  implies
\begin{equation}\label{phiomegajA}
\phi_{j, A}(\sigma^j\omega, y) \le D' \frac{\leb_{\sigma^j\omega}(A_{\sigma^j\omega})}{\leb_{\sigma^j\omega}(\Delta_{\sigma^j\omega})}= D'\leb_{\sigma^j\omega}(A_{\sigma^j\omega}).
\end{equation}
On the other hand, if  $A_\omega\in \mP_{\omega}^{(j)}|\Delta_{\omega, 0}$ such that $F^j_{\omega}(A_{\omega})\subset \Delta_{{\sigma^j\omega}, \ell}$ for $\ell>0$ then  $\phi_{j, A}(\sigma^j\omega, y)(x)=\phi_{j-\ell, A}^{\sigma^{j-\ell}\omega} (\sigma^{j-\ell}\omega, F_{\sigma^{-\ell}\omega}^{-\ell}(x))$ and therefore, by \eqref{phiomegajA} it is bounded. Hence we get $d(F_\ast^n\mu_0)/dP<D'$ for every $n\ge 1$. This implies absolute continuity of $\mu$. Now we are in position to show that $\mu$ is indeed a probability measure.
We have
\begin{equation}\label{low:phiomegajA}
\phi_{j, A}(\sigma^j\omega, y) \ge\frac 1{D'}\leb_{\sigma^j\omega}(A_{\sigma^j\omega}).
\end{equation}
By \eqref{low:phiomegajA} for every $n\ge 1$ and for every continuous $\varphi:{\mathbb \Delta}\to \mathbb R$ we have 
\[
\int_{\mathbb \Delta} \vp d\mu_n\ge \frac1n\sum_{j=0}^{n-1}\int_{\mathbb \Delta} \vp\frac{dF^n_\ast\mu_0}{d\mu_0}d\mu_0\ge \frac1{D'}\int_{\mathbb \Delta}\vp d\mu_0=\frac1{D'}\int_{\mathbb \Delta_0}\vp dP.
\]
Thus, $\mu_n$ cannot converge to $0$ in the weak$^*$ topology. Hence, by this and the absolute continuity of $\mu$, we get that $\mu$ is a finite positive measure that can be normalised. We also call the normalised $F$-invariant probability measure $\mu$.

By Rohklin's theorem one can disintegrate $\mu$ to obtain measurable family of equivariant measures $\{\tilde\mu_\omega\}$, i.e. $\omega\mapsto \tilde\mu_\omega$ is measurable and $(F_\omega)_\ast\tilde\mu_\omega=\tilde\mu_{\sigma\omega}$. The absolute continuity of $\mu$ with respect to $P$ readily implies the absolute continuity of $\tilde\mu_\omega$ with respect to $\lambda_\omega$ for $\PP$-almost every $\omega\in\Omega$. 

Now we use results from the literature to show that $\tilde\mu$ is unique and $1/C<d\tilde\mu_\omega/d\lambda_\omega<C$, $C>0$. In \cite[Theorem 2.10 ]{AV13} the existence of fibrewise measures $\{\mu_{\omega}\}$ satisfying $(F_\omega)_\ast\mu_\omega=\mu_{\sigma\omega}$ is proved. Moreover, in \cite[Theorem 2.10 ]{AV13} it is proved that\footnote{The uniform summability assumption, $\sum_{n\ge0} \lambda_{\sigma^{-n}\omega}\{R_{\sigma^{-n}\omega} >n\}\le C$, of \cite{AV13} follows from the uniform decay (P5) in our paper.} $1/C<d\mu_\omega/d\leb_\omega<C$, $C>0$. While the uniqueness of $\{\mu_{\omega}\}$ is proved in \cite{H22}. Therefore, $\tilde\mu_\omega=\mu_\omega$ for $\PP$-almost every $\omega\in\Omega$ and by our work above $\omega\to\mu_\omega$ is measurable. Now that $\omega\to\mu_\omega$ is measurable, the fibrewise mixing property follows verbatim from [\cite{BBMD}, Lemma 4.2]. This finishes the proof of Theorem~\ref{thm:main0}.
\end{proof}

\section{exponential operational correlations}\label{sec:proof2}
We prove Theorem \ref{thm:main1} in a series of lemmas and propositions using transfer operators on random towers and appropriate Birkhoff cones \cite{Birk1,Birk2}. 

\subsection{Transfer operators for random towers and Lasota-Yorke inequalities}
 Let $v_\omega:\Lambda_\omega\to\mathbb R$ be given by: for $x\in \Delta_{\omega,\ell}$, $$v_\omega(x)=e^{\ell\theta'},$$ 
 where $\theta'\in(0,\theta)$ and $\theta$ is as in (P5).  For any measurable set $A$ define $m_\omega(A)=\int_Av_\omega(x) d{\leb_\omega}$.
The transfer operator associated with the random tower map is defined by
\begin{equation}\label{eq:transnormal}
\mL_\omega\psi_\omega(x)=\sum_{F_\omega(y)=x}JF_\omega^{-1}(y)\psi_\omega(y).
\end{equation}
 It is easy to check that if there exists $h_\omega\ge 0$, with $\int h_\omega d\leb_\omega=1$ and $\mL_\omega h_\omega=h_{\sigma\omega}$, then  $\{\mu_\omega=h_\omega\cdot \leb_\omega\}$ is an absolutely continuous equivariant family of probability measures.
In our work we also use another transfer operator to deduce results about $\mL_\omega$, namely 
\begin{equation*}
    P_\omega\psi_\omega:=v^{-1}_{\sigma\omega}\mL_\omega(v_\omega\psi_\omega),
\end{equation*}
where the function $v_\omega$ is defined above.  Moreover,
\begin{equation*}
    P^n_\omega\psi_\omega:=v^{-1}_{\sigma^n\omega}\mL_\omega^n(v_\omega\psi_\omega),
\end{equation*}
and $P_\omega$ satisfies the following duality: for $\psi\in \mF$ and $\varphi\in L^\infty(\Delta)$
\begin{equation}\label{eq:duality}
\int \varphi_{\sigma \omega}\circ F_\omega\cdot\psi_\omega dm_\omega=\int P_\omega\psi_\omega\cdot\varphi_{\sigma\omega} dm_{\sigma\omega}.    
\end{equation}
%where $m_\omega(A)=\int_Av_\omega(x) d\leb_\omega$. 
Consequently, $m_\omega$ is a conformal measure with respect to $P_\omega$, i.e. $P_\omega^*m_\omega=m_{\sigma\omega}$. Notice also, by Theorem \ref{thm:main0}, $\tilde h_\omega:=h_{\omega}/v_{\omega}$ satisfies $P_\omega\tilde h_\omega=\tilde h_{\sigma\omega}$.

For any $n\in\NN$ let $x_i\in \Delta_{\sigma^n\omega}$, and $y_i\in \Delta_{\omega, \ell}^j$ be such that $F^n_\omega(y_i)=x_i$, for $i=1,2$. Then we have $s_\omega(y_1,y_2)=s_{\sigma^n\omega}(x_1, x_2)+n$.
Also, by \eqref{bddd} we have 
\begin{equation}\label{bdddn}
\left|\frac{JF_\omega^n(y_1)}{JF_\omega^n(y_2)}-1\right|\le D_Fd_{\sigma^n\omega}(x_1, x_2),
\end{equation}
for some $D_F$ independent of $n$.
In what follows we will avoid enumerating constants and keep denoting by $C>0$ some large constant, independent of $\omega$.

\begin{lemma}
There exists $C>0$ such that for any $n\in\NN$ we have
\begin{equation}\label{eq:Uni}
    |P^n_\omega{\bf 1}|_{\infty}\le C.
\end{equation}

\end{lemma}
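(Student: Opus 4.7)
The plan is to reduce the question to a uniform bound on the base $\Delta_{\omega,0}$, and then exploit the interplay between (P2), (P5), and Theorem~\ref{thm:main0}. First I would unpack $P^n_\omega\mathbf{1}(x)=v^{-1}_{\sigma^n\omega}(x)\,\mL^n_\omega(v_\omega)(x)$ and analyze the $F^n_\omega$-preimages of a point $(x_0,\ell)\in\Delta_{\sigma^n\omega}$. Since the last $\ell$ steps of any orbit ending at level $\ell$ are pure climbs with unit Jacobian, the preimage is either unique --- when $\ell\ge n$, namely $(x_0,\ell-n)$, giving $P^n_\omega\mathbf{1}(x_0,\ell)=e^{-n\theta'}\le 1$ --- or the orbit visits $(x_0,0)\in\Delta_{\sigma^{n-\ell}\omega,0}$ at time $n-\ell$. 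Factoring out the climbing contribution produces the recursive identity $P^n_\omega\mathbf{1}(x_0,\ell)=e^{-\ell\theta'}\,P^{n-\ell}_\omega\mathbf{1}(x_0,0)$ for $\ell<n$, reducing the problem to a uniform bound on $P^m_\omega\mathbf{1}(x_0,0)$ over $m,\omega$.

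Next I would decompose the sum defining $P^m_\omega\mathbf{1}(x_0,0)$ by the first-return time $k$ of each preimage orbit. A preimage $y=(y_0,j_0)$ first returning at time $k$ belongs to a column $\Delta^j_{\omega,j_0}$ with $R|_{\Lambda^j(\sigma^{-j_0}\omega)}=k+j_0$, and iterating (P2) over this column yields $JF^k_\omega(y)^{-1}\le C\,\lambda_\omega(\Delta^j_{\omega,j_0})$. Multiplying by $v_\omega(y)=e^{j_0\theta'}$ and summing over $(j_0,j)$, the contribution of first-return preimages landing at a fixed $z\in\Delta_{\sigma^k\omega,0}$ is bounded by
\[
C\sum_{j_0\ge 0} e^{j_0\theta'}\,\lambda_{\sigma^{-j_0}\omega}\{R=k+j_0\}\;\le\; C'\,e^{-\theta k},
\]
using (P5) together with $\theta'<\theta$, uniformly in $z$. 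Factoring this out of the remaining Jacobian sum over $z\in(F^{m-k}_{\sigma^k\omega})^{-1}(x_0,0)$ then produces
\[
P^m_\omega\mathbf{1}(x_0,0)\;\le\; C'\sum_{k=1}^{m} e^{-\theta k}\,\mL^{m-k}_{\sigma^k\omega}\mathbf{1}(x_0,0).
\]

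Finally, to bound $|\mL^j_\omega\mathbf{1}|_\infty$ uniformly I would invoke Theorem~\ref{thm:main0}: the equivariant density $h_\omega$ satisfies $1/C\le h_\omega\le C$ with $\mL^j_\omega h_\omega=h_{\sigma^j\omega}$, so $\mathbf{1}\le C h_\omega$ gives $\mL^j_\omega\mathbf{1}\le C h_{\sigma^j\omega}\le C^2$ uniformly in $j,\omega$. Summing the resulting geometric series yields $P^m_\omega\mathbf{1}(x_0,0)\le C''$ uniformly, and the recursive identity from the first step completes the proof. The main subtlety, and the reason the direct first-return decomposition cannot be avoided, is that one cannot dominate $\mathbf{1}$ pointwise by the $P_\omega$-equivariant density $\tilde h_\omega=h_\omega/v_\omega$, since $\tilde h_\omega$ decays exponentially with the tower level; only by matching the growth of the weight $v_\omega$ against the exponential tail (P5) directly can one produce a uniform bound.
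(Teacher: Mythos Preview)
Your proof is correct but takes a more circuitous route than the paper. Both arguments begin the same way: the cases $\ell\ge n$ and $0\le\ell<n$ reduce everything to bounding $P^m_\omega\mathbf{1}$ on the base. From there, however, the paper simply groups the $F^m_\omega$-preimages of $(x_0,0)$ by their starting level $\ell$ in $\Delta_\omega$. Bounded distortion over the full $m$ steps (equation~\eqref{bdddn}) gives $JF^m_\omega(y)^{-1}\le D_F\,\leb_\omega(\Delta^*_{\omega,\ell})$ for each branch $\Delta^*_{\omega,\ell}$, and summing all branches at level $\ell$ yields at most $\leb_\omega(\Delta_{\omega,\ell})\le Ce^{-\theta\ell}$ by (P5). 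Multiplying by $v_\omega=e^{\ell\theta'}$ and summing over $\ell$ gives a convergent geometric series, finishing the proof using only (P2) and (P5).

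Your first-return decomposition is valid, and your bound on the first-return contribution is the same computation the paper does (indexed slightly differently). But you then need a uniform bound on $\mL^{m-k}_{\sigma^k\omega}\mathbf{1}$, for which you invoke the density bounds of Theorem~\ref{thm:main0}. This is not circular, since Theorem~\ref{thm:main0} is proved independently in Section~\ref{sec:proof1}, but it makes the lemma depend on an external and much deeper result when in fact it is an elementary consequence of distortion and tails. Your closing remark about why one cannot dominate $\mathbf{1}$ by $\tilde h_\omega$ is correct, but the paper's argument shows that no comparison with any invariant density is needed at all: grouping by starting level already matches the weight $e^{\ell\theta'}$ against the tail $e^{-\theta\ell}$ in a single sum.
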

\begin{proof}
From \eqref{bdddn} there exists $D_F>0$ such that for any $\Delta_{\omega, \ell}^\ast\subset\Delta_{\omega, \ell}$ with $F_\omega^n(\Delta_{\omega, \ell}^\ast)=\Delta_{\sigma^n\omega, 0}$ and $x\in\Delta_{\sigma^n\omega, 0}$ we have 
\[
\frac{1}{D_F}\frac{\leb_\omega(\Delta_{\omega, \ell}^\ast)}{\leb_\omega(\Delta_{\sigma\omega, 0})}\le \frac{1}{JF^n_\omega(x)}\le D_F\frac{\leb_\omega(\Delta_{\omega, \ell}^\ast)}{\leb_\omega(\Delta_{\sigma\omega, 0})}.
\]
Thus, using (P5), for any $x\in \Delta_{\sigma^n\omega, 0}$ we have 
\begin{equation}\label{eq:l0}
|P^n_\omega{\bf 1}(x)|  \le D_F \sum_{\ell\ge 0}\frac{\leb_\omega(\Delta_{\omega, \ell})}{\leb_\omega(\Delta_{\sigma\omega, 0})} e^{\ell\theta'}\le C'\sum_{\ell\ge 0}e^{(\theta'-\theta)\ell} \le C.
\end{equation}
For $x\in \Delta_{\sigma^n\omega, \ell}$ with $1\le \ell \le n$ we have 
\[
|P^n_\omega{\bf 1}(x)|=e^{-\ell\theta'}|P^n_\omega{\bf 1}(F^{-\ell}_\omega x)|< C, 
\]
where in the last inequality we used \eqref{eq:l0}. 
Finally, for $x\in \Delta_{\sigma^n\omega, \ell}$ with $\ell\ge n$ we have 
\[
|P^n_\omega{\bf 1}(x)|=e^{-\ell\theta'} e^{(\ell-n)\theta'}= e^{-n\theta'}\le 1.
\]
This proves the lemma.
\end{proof}
Recall that $\mP_\omega$ is the partition of the tower $\Delta_\omega$ and define
\[
\mP^{k}_\omega=\bigvee_{j=0}^{k-1}F^{-j}_\omega\mP_{\sigma^{j}\omega}.
\]
We denote by $C_{k, \omega}(x)$ the element of $\mP_\omega^k$ containing $x$. Let  $\rho_\omega:\Delta_\omega\to \mathbb R$ be defined  as $\rho_\omega(x)=\gamma$, if $x$ is in the base, and  $\rho_\omega(x)=1$ otherwise. For $k\in \mathbb N$ let $\rho^{(k)}_\omega(x)=\prod_{i=0}^k\rho_{\sigma^i\omega}(F^i_\omega x)$. Recall that if $x$ and $y$ are in $\Delta_{\omega,0}$ and $F^{k}(x')=x$ then there is a unique $y'\in C_{k, \omega}(x')$, the latter is the $k^{\text{th}}$ cylinder containing $x'$, such that $F^{k}(y')=y$. Let %$k\ge 1$ be a large integer and 
$\zeta\in (0, 1)$  be a small number to be chosen below. Let 
 \[
 \mathcal G_\omega^c=\Bigg( \bigcup_{\hat R^j_{\omega, \ell}>\lfloor\zeta n\rfloor}\Delta_{\omega, \ell}^j \Bigg) \bigcup \left\{x\mid R_{\lfloor\zeta n\rfloor}(\omega, x)> n\right\}
 \]
 and $ \mathcal G_\omega$ denote its complement.
 %Notice that for $x\in\mathcal G_\omega$ we have $\rho_\omega(x)^{(n)}\le\gamma^{\zeta n}$.
For each $k\ge 1$ write 
 $$ \bar\mP_\omega:=\mP^{(k)}_\omega|_{\mathcal G_\omega};\quad \bar\mP^c_\omega:=\{\mathcal G_\omega^c\} \quad\text{and}\quad \hat\mP_\omega:=\bar\mP_\omega \cup \bar\mP^c_\omega,$$
where we omit the dependence of $\bar\mP_\omega$ and $\hat\mP_\omega$ on $k$ and the dependence of 
$\mathcal G_\omega^c$ on $n$ for notational simplicity.
In rough terms, $\bar\mP^c_\omega$ consists of the set of points in the tail of the inducing time and the partition
$\bar\mP_\omega$ is formed by those elements of $\mP^{k}_\omega$ in the set of points with smaller inducing times.

\begin{remark}
In \cite{H22} Hafouta studied probabilistic limit theorems for non-uniformly hyperbolic systems in an iid setting, using Birkhoff cones techniques on random towers. The results of \cite{H22} use the rates of correlations decay of \cite{ABR22}, which are in an iid setting. In particular, the results of \cite{ABR22} were used by  \cite{H22} to prove a stronger version of Lemma \ref{lem:mix} below with uniform bounds independent of $\omega$ (see Lemma 4.2.2 of \cite{H22}, in particular equation (4.17) of \cite{H22}). However, in our work, since we are after the correlations decay rates to start with, and in a non-iid setting, Lemma \ref{lem:mix} below is the best that one can hope for in terms of fibrewise mixing. In fact, the random variables that appear in Lemma \ref{lem:mix} below are the main source of difficulty in proving the correlation decay rate in our non-iid setting, and they eventually lead to the random variable $C_\omega$ that appears in Theorem \ref{thm:main1}.
\end{remark}

\begin{lemma}\label{lem:mix}
There exist constants $\alpha<1< \alpha'$  and a random variable $q_0:\Omega\to \NN$ such that 
\begin{equation}\label{eq:mix}
\alpha<\frac{m_\omega(A_\omega\cap F^{-k}_\omega(A'_{\sigma^k\omega}))}{m_\omega(A_\omega)\mu_{\sigma^k\omega}(A'_{\sigma^k\omega})}<\alpha'.
\end{equation}
for all $k\ge q_0(\omega)$, $A_\omega\in \hat\mP_\omega$ and $A_{\sigma^k\omega}'\in \hat\mP_{\sigma^k\omega}$.
\end{lemma}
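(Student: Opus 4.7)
The strategy is to reduce the $m_\omega$-ratio in \eqref{eq:mix} to the fibrewise mixing statement \eqref{eq:fibmix} of Theorem~\ref{thm:main0}, which is phrased in terms of the equivariant family $\{\mu_\omega\}$. Throughout I use that, by the same theorem, $1/C \le d\mu_\omega/d\lambda_\omega \le C$ uniformly, so $\mu_\omega$-measures and $\lambda_\omega$-measures of a given set differ only by a factor in $[1/C, C]$.

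I would first treat the generic case $A_\omega \in \bar{\mathcal{P}}_\omega$. Such an atom is a single cylinder of the dynamical refinement of $\mathcal{P}_\omega$ sitting in $\mathcal{G}_\omega$, so it lies at a single level $\ell \le \zeta n$ of the tower, and the weight $v_\omega \equiv e^{\ell\theta'}$ is constant on both $A_\omega$ and on $A_\omega \cap F^{-k}_\omega(A'_{\sigma^k\omega})$. This common factor cancels between numerator and the $m_\omega(A_\omega)$ factor, reducing the ratio to
\[
\frac{\lambda_\omega\bigl(A_\omega \cap F^{-k}_\omega(A'_{\sigma^k\omega})\bigr)}{\lambda_\omega(A_\omega)\,\mu_{\sigma^k\omega}(A'_{\sigma^k\omega})}.
\]
Replacing both $\lambda_\omega$ by $\mu_\omega$ costs at most a factor $C^2$, and \eqref{eq:fibmix} applied to the pair $(A_\omega, A'_{\sigma^k\omega})$ then forces the $\mu$-numerator to be within $\tfrac12 \mu_\omega(A_\omega)\mu_{\sigma^k\omega}(A'_{\sigma^k\omega})$ of its product form, once $k$ is past a threshold depending on the pair. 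Since $\hat{\mathcal{P}}_\omega$ is a finite partition, taking the maximum over pairs in both fibres yields a random variable $q_0(\omega)$ beyond which the ratio is trapped in $[1/(2C^2),\,2C^2]$.

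To treat the exceptional atom $A_\omega = \mathcal{G}_\omega^c$, which spans infinitely many floors, I would slice $A_\omega = \bigsqcup_\ell (A_\omega \cap \Delta_{\omega,\ell})$ and write
\[
m_\omega\bigl(A_\omega \cap F^{-k}_\omega(A'_{\sigma^k\omega})\bigr) \;=\; \sum_\ell e^{\ell\theta'}\,\lambda_\omega\bigl(A_\omega \cap \Delta_{\omega,\ell} \cap F^{-k}_\omega(A'_{\sigma^k\omega})\bigr).
\]
Truncating at some large $L$, the tail $\sum_{\ell>L} e^{\ell\theta'}\lambda_\omega(\Delta_{\omega,\ell}) \le C e^{-(\theta-\theta')L}$ is small by (P5) and can be made negligible compared to $m_\omega(\mathcal{G}_\omega^c)$; the finitely many retained slices each lie at a single level and are handled exactly as in the generic case, and a final resummation against the weights $e^{\ell\theta'}$ delivers $m_\omega(A_\omega \cap F^{-k}_\omega(A'_{\sigma^k\omega})) \approx m_\omega(A_\omega)\,\mu_{\sigma^k\omega}(A'_{\sigma^k\omega})$ with bounded multiplicative error. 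The symmetric case $A'_{\sigma^k\omega} = \mathcal{G}_{\sigma^k\omega}^c$ is analogous on the target fibre.

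The main obstacle is that \eqref{eq:fibmix} is only qualitative: the threshold beyond which the approximation is close enough to give a universal ratio bound depends on the minimum $\mu$-mass across the finitely many atoms of $\hat{\mathcal{P}}_\omega$ (and of $\hat{\mathcal{P}}_{\sigma^k\omega}$ as $k$ varies along the orbit of $\omega$), a positive but $\omega$-dependent quantity. This is precisely what forces $q_0$ to be a genuine random variable rather than a constant, and it is this non-uniformity that later propagates into the random variable $C_\omega$ appearing in Theorem~\ref{thm:main1}.
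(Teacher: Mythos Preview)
Your approach is essentially the paper's: reduce the $m_\omega$-ratio to the $\mu_\omega$-ratio via the uniform density bounds $1/C \le d\mu_\omega/d\lambda_\omega \le C$, then invoke the fibrewise mixing \eqref{eq:fibmix} of Theorem~\ref{thm:main0} together with the finiteness of $\hat\mP_\omega$ to absorb all pairwise thresholds into a single random $q_0(\omega)$. The paper's proof is slightly more direct in that it applies \eqref{eq:fibmix} to each atom of $\hat\mP_\omega$ as a whole set (including $\mathcal G_\omega^c$), obtains the $\mu$-ratio in $(1/2,3/2)$ for $k\ge q_0(\omega)$, converts to a $\lambda$-ratio in $\bigl(1/(2C^2),\,3C^2/2\bigr)$ by the density bounds, and then passes to $m_\omega$ by a one-line appeal to its definition, without any level-by-level slicing or tail truncation; your more explicit treatment of the multi-level atom $\mathcal G_\omega^c$ is extra bookkeeping the paper leaves implicit.
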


\begin{proof}
For each $k\in \NN$, since both $\hat\mP_\omega$ and $\hat\mP_{\sigma^k \omega}$ are finite partitions, the function 
\[
\rho (A_\omega, A_{\sigma^k\omega}')=\frac{\mu_\omega(A_\omega\cap F^{-k}_\omega(A'_{\sigma^k\omega}))}{\mu_\omega(A_\omega)\mu_{\sigma^k\omega}(A'_{\sigma^k\omega})}.
\]
has finitely many values. Since $\{\mu_\omega\}$ is fibrewise mixing, we have $\lim_{k\to\infty}\rho (A_\omega, A_{\sigma^k\omega}') =1$, and hence  for any $\eps>0$ there exists $q_0(\eps, \omega)\in\NN$ such that  
\[
1-\eps <\rho (A_\omega, A_{\sigma^k\omega}') <1+\eps,
\]
for  all $k\ge q_0(\eps, \omega)$,  $A_\omega\in \hat\mP_\omega$ and $A_{\sigma^k\omega}'\in \hat\mP_{\sigma^k\omega}$. Since  $1/C<d\mu_\omega/d\leb_\omega<C$ for $\PP$-almost all $\omega\in\Omega$, choosing $\eps=1/2$ we have 
\[
\frac{1}{2C^2}< \frac{\leb_\omega(A_\omega\cap F^{-k}_\omega(A'_{\sigma^k\omega}))} {\leb_\omega(A_\omega)\mu_{\sigma^k\omega}(A'_{\sigma^k\omega})}<\frac{3C^2}{2}.
\]
Choosing $\alpha=\frac{1}{2C^2}$, $\alpha'=\frac{3C^2}{2}$ and using the definition of $m_\omega$ finishes the proof. 
\end{proof} 

\begin{lemma}\label{lem:rhon}
There exist $\zeta>0$ and $\bar\theta>0$ such that for every $n\in\NN$, every $x\in \mathcal G_\omega$ and $\PP$-almost every $\omega \in \Omega$ %there exists $\mathcal G_\omega\subset \Delta_\omega$ with
we have 
$$
m_\omega(\mathcal G_\omega^c)\le Ce^{-\bar\theta n}\quad\text{and} \quad  
	\rho^{(n)}_{\omega}(x)\le \gamma^{\lfloor\zeta n\rfloor}.
$$
\end{lemma}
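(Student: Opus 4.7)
The plan is to split $\mathcal G_\omega^c$ into the two pieces appearing in its definition, namely $A_1=\bigcup_{\hat R^j_{\omega,\ell}>\lfloor\zeta n\rfloor}\Delta^j_{\omega,\ell}$ and $A_2=\{x\in\Delta_\omega:R_{\lfloor\zeta n\rfloor}(\omega,x)>n\}$, and bound $m_\omega$ of each. For $A_1$, I would decompose by floor: the floor-$\ell$ part has $\leb_\omega$-mass equal to $\leb_{\sigma^{-\ell}\omega}\{R_{\sigma^{-\ell}\omega}>\max(\ell,\lfloor\zeta n\rfloor)\}$, which by (P5) is at most $Ce^{-\theta\max(\ell,\lfloor\zeta n\rfloor)}$. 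Weighting by $v_\omega=e^{\ell\theta'}$ and summing separately for $\ell\le\lfloor\zeta n\rfloor$ (where the Lebesgue bound does not depend on $\ell$) and for $\ell>\lfloor\zeta n\rfloor$ (a convergent geometric series thanks to $\theta'<\theta$), one immediately gets $m_\omega(A_1)\le Ce^{-(\theta-\theta')\zeta n}$.

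For $A_2$, observe first that any point in $A_2$ sitting on a floor $\ell>\lfloor\zeta n\rfloor$ satisfies $\hat R\ge\ell+1>\lfloor\zeta n\rfloor$, hence already belongs to $A_1$; thus on $A_2\setminus A_1$ we have $v_\omega\le e^{\zeta n\theta'}$, and it suffices to show $\leb_\omega(A_2)\le Ce^{-\tau n}$ for some $\tau>\zeta\theta'$. This is an exponential-Chebyshev estimate for the iterated return time: for any $t\in(0,\theta)$,
\[
\leb_\omega(A_2)\le e^{-tn}\int_{\Delta_\omega}e^{tR_{\lfloor\zeta n\rfloor}}\,d\leb_\omega.
\]
The moment $\Phi_k(\omega):=\int e^{tR_k}d\leb_\omega$ I would control by induction on $k$: partitioning $\Delta_\omega$ by the level sets $\{R_1=n_1\}$, using the bounded distortion (P2) to push the remaining integral onto the fiber $\sigma^{n_1}\omega$ with bounded Jacobian cost, and appealing to (P5) to conclude that $\sum_{n_1}e^{tn_1}\leb\{R_1=n_1\}\le M(t)<\infty$. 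This yields $\sup_\omega\Phi_{k+1}(\omega)\le CM(t)\sup_\omega\Phi_k(\omega)$, whence $\Phi_k\le(CM(t))^k$. Choosing $\zeta>0$ small enough that $t-\zeta\bigl(\theta'+\log(CM(t))\bigr)>0$ then gives $m_\omega(A_2)\le Ce^{-\bar\theta n}$ and completes the first assertion of the lemma.

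For the second assertion, I would write $\rho_\omega^{(n)}(x)=\gamma^{N(\omega,x,n)}$ with $N(\omega,x,n)=\#\{0\le i\le n:F_\omega^i(x)\in\Delta_{\sigma^i\omega,0}\}$, so that it suffices to prove that $x\in\mathcal G_\omega$ forces $N(\omega,x,n)\ge\lfloor\zeta n\rfloor$. Since $x\notin A_1$ the column containing $x$ has length $\hat R\le\lfloor\zeta n\rfloor$, so the first base visit of $x$ occurs at time $\hat R-\ell\le\hat R=R_1(\omega,x)$. The recursion defining $R_k$ uses $R_1=\hat R$ and each increment $R_j-R_{j-1}=R_1(\sigma^{R_{j-1}}\omega,F_\omega^{R_{j-1}}x)$ is again a full column length at the current position, which dominates the time to the next base visit. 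Consequently $R_k(\omega,x)$ dominates the time at which the $k$-th base visit of $x$ occurs, and from $R_{\lfloor\zeta n\rfloor}(\omega,x)\le n$ we read off $N(\omega,x,n)\ge\lfloor\zeta n\rfloor$, giving $\rho_\omega^{(n)}(x)\le\gamma^{\lfloor\zeta n\rfloor}$.

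The main obstacle is the inductive moment bound $\Phi_k\le(CM(t))^k$: the increments of $R_k$ live on different $\omega$-fibers and are not independent, so the induction must transfer integrals between fibers at every step. The only tool for doing so uniformly in $\omega$ is the bounded distortion (P2), combined with (P5) to secure a finite single-step exponential moment. Everything else is a matter of choosing the three small constants $\zeta,\theta',t$ (with $t,\theta'<\theta$) so that the competing exponents in the two estimates align, and taking $\bar\theta>0$ accordingly.
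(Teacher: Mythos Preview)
Your proof is correct and reaches the same conclusion, but the route you take for the measure estimate is genuinely different from the paper's.

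For the bound on $m_\omega(\mathcal G_\omega^c)$ you use an exponential Chebyshev inequality together with an inductive control of the moment generating function $\Phi_k(\omega)=\int e^{tR_k}\,d\leb_\omega$, transferring integrals between fibres via bounded distortion and summing the single--step moment with~(P5). The paper instead expands the event $\{R_{\lfloor\zeta n\rfloor}>n\}$ as a union over compositions $(k_1,\dots,k_{\lfloor\zeta n\rfloor})$ of the increments $R_i-R_{i-1}$, bounds each conditional factor by $CD'e^{-\theta k_i}$ using (P2) and~(P5), and then invokes the combinatorial lemma of Bruin--Luzzatto--van~Strien to count such compositions, obtaining $\leb_\omega(\mathcal G_\omega^c)\le e^{-\hat\theta n}$. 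Only after this does the paper pass to $m_\omega$ by splitting the sum over floors at $\lfloor n/2\rfloor$. Your approach has the advantage of avoiding the external combinatorial lemma and of isolating the contribution of $A_1$ cleanly (the paper does not separate the two pieces of $\mathcal G_\omega^c$); the paper's approach is somewhat more transparent about how the final exponent $\hat\theta$ depends on $\zeta$. Your moment induction is formulated over the full tower, where $F_\omega^{R_1}$ need not land on the base; this requires the same kind of care as the paper's conditional estimates do for points on higher floors, and both arguments are at the same level of detail on this point. Finally, your justification of $\rho^{(n)}_\omega(x)\le\gamma^{\lfloor\zeta n\rfloor}$ via the inequality $b_k\le R_k$ spells out exactly what the paper asserts in a single sentence.
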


\begin{proof}
%Define a sequence of return times: $R_{1}(\omega, x)=R_\omega(x)$, $R_{i+1}(\omega, x)=R_i(\omega, x)+R_1(\sigma^{R_i(\omega, x)}\omega, F^{R_i(\omega, x)}_\omega(x))$, $i\in\NN$. 
 %(\sigma^{R_{i-1}(\omega, x)}\omega, F_\omega^{R_i(\omega, x)}(x)_\le n\right\}
Recall that, by definition, if  $x\in \mathcal G_\omega$ then the orbit of $x$ has at least $\lfloor\zeta n\rfloor$ returns 
to the base $\Delta_{\omega,0}$ of the tower before time $n$. Hence, $\rho^{(n)}_\omega(x)\le \gamma^{\lfloor\zeta n\rfloor}$. It remains to estimate the measure of $\mathcal G_\omega^c$.  Bounded distortion \eqref{bdddn} and the tail estimate (P5) implies that for every $k_1, k_2, \dots, k_{\lfloor\zeta n\rfloor-1}\in \NN$ such that $\sum_{i=1}^{\lfloor\zeta n\rfloor-1}k_i<n$ we have 
 
\begin{align*}
&\leb_\omega\{R_1(\omega, x)=k_1\}\times\\
&\prod_{i=2}^{\lfloor\zeta n\rfloor-1} \leb_\omega\{R_i(\omega, x) - R_{i-1}(\omega, x)=k_i\mid R_j-R_{j-1}=k_j, 2\le j\le i-1\} \\
&\times \leb_\omega\{R_{\lfloor\zeta n\rfloor}(\omega, x) - R_{\lfloor\zeta n\rfloor-1}(\omega, x)> n-\sum_{i}k_i\}\\
&\le (CD')^{\lfloor\zeta n\rfloor}\prod_{i=1}^{\lfloor\zeta n\rfloor-1}e^{(k_i-1)\theta}\times e^{(n-\sum_ik_i)\theta}
\le e^{-n\theta+ \lfloor\zeta n\rfloor(\log(CD')+\theta)}
\end{align*}
%\[
%\leb_\omega(\mathcal G_\omega^c)\le \sum_{i=1}^{\sqrt n}\leb_\omega\{R_i(\omega, x)>n^{1/2}\}\le C\sqrt{n}e^{-c \sqrt n}.
%\]
Moreover, using  Lemma 3.4 from \cite{BLvS}: for each small $\zeta\in (0, 1)$ there exists $\hat\zeta>0$ such that $\hat\zeta\to 0$ as $\zeta\to 0$ and  
\[
\text{card}\{(k_1, \dots, k_s)\in \NN^s\mid k_1+\dots+ k_s=n, s\le \lfloor\zeta n\rfloor\}\le e^{\hat\zeta n}, 
\]
for every $n\in\NN$.
Thus, we obtain 
\begin{align*}
\leb_\omega(\mathcal G_\omega^c)
	& \le \text{card}\{(k_1, \dots, k_s)\in \NN^s\mid k_1+\dots+ k_s=n, s\le \lfloor\zeta n\rfloor\}e^{-n\theta+ \lfloor\zeta n\rfloor(\log(CD')+\theta)}\\
	& \le e^{-n\theta+ \lfloor\zeta n\rfloor(\log(CD')+\theta)+\hat\zeta n}=e^{-\hat\theta n},
\end{align*}
for some $\hat\theta>0$. This is possible because, for sufficiently small $\zeta>0$ we have $-\theta+ \lfloor\zeta n\rfloor(\log(CD')+\theta)/n+\hat\zeta =-\hat \theta<0$. Finally, using the latter and (P5), 
\begin{align*}
m_\omega(\mathcal G_\omega^c)
	& =\sum_{\ell=0}^{\infty}\leb_\omega(\mathcal G_\omega^c\cap\Delta_{\omega,\ell})e^{\ell\theta'}\\
	& \le\sum_{\ell=0}^{\lfloor n/2\rfloor}\leb_\omega(\mathcal G_\omega^c)e^{\ell\theta'}+\sum_{\ell=\lfloor n/2\rfloor+1}^{\infty}\leb_\omega(\Delta_{\omega,\ell})e^{\ell\theta'}
	\le C e^{-\bar\theta n},
\end{align*}
where $\bar\theta=\min\{\frac{\hat\theta}{2},\frac{\theta-\theta'}{2}\}>0$. This completes the proof of the lemma.
\end{proof}

 Let 
 \begin{equation}\label{eq:dconst}
 \D_1:=\text{diam}\bar\mP_\omega\quad\text{ and } \quad\mathcal D_2:=m_\omega(\mathcal G_\omega^c).
 \end{equation}
 
 Notice that $\D_1$ is smaller than $\gamma^{\lfloor\zeta n\rfloor}$ and by Lemma \ref{lem:rhon}, $\mathcal D_2\le C e^{- \bar\theta n}$. Now we are ready to obtain Lasota-Yorke type estimates.
 
\begin{lemma}\label{le:estimateaux}
There exists $C>0$ and for each $\eta>0$ there exists $k_0(\eta)\in\NN$ such that for all $k>k_0$ and $\psi=(\psi_\omega)_\omega \in\mF$ we have
\begin{itemize}
\item[1.] if $\ell\ge k$ and $x,y\in\Delta_{\sigma^k\omega,\ell}$ then 
$$
|P_\omega^k\psi_\omega(x)-P_\omega^k\psi_\omega(y)|=e^{-\theta'k}|\psi_\omega|_h \,d_{\sigma^k\omega}(x, y);
$$
\item[2.] if $\ell< k$ and $x,y\in\Delta_{\sigma^k\omega,\ell}$ then
$$|P_\omega^k\psi_\omega(x) - P_\omega^k\psi_\omega(x)|\le 
d_{\sigma^k\omega}(x,y)e^{-\theta'\ell}\left[(C\gamma^{\zeta k}+\eta D_F)|\psi_\omega|_h+D_FC|\psi_\omega|_{\infty}\right].
$$
\end{itemize}
\end{lemma}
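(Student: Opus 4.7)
The plan splits according to the two stated cases.

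For \emph{Case 1} ($\ell\ge k$), the calculation is essentially direct. Since $x\in\Delta_{\sigma^k\omega,\ell}$ with $\ell\ge k$, descent down the tower stays above the base and $x$ has a unique preimage $F_\omega^{-k}(x)\in\Delta_{\omega,\ell-k}$ under $F_\omega^k$ with unit Jacobian. The definitions of $P_\omega^k$ and $v_\omega$ then give $P_\omega^k\psi_\omega(x)=e^{-k\theta'}\psi_\omega(F_\omega^{-k}x)$. Because $x,y$ share the partition element $\Delta_{\sigma^k\omega,\ell}$, the separation-time identity $s_\omega(F_\omega^{-k}x,F_\omega^{-k}y)=k+s_{\sigma^k\omega}(x,y)$ combined with the Hölder seminorm $|\psi_\omega|_h$ yields the stated bound (reading ``$=$'' as ``$\le$'').

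For \emph{Case 2} ($\ell<k$), I would first descend to the base: setting $z=F_\omega^{-\ell}x$ and $w=F_\omega^{-\ell}y$ in $\Delta_{\sigma^{k-\ell}\omega,0}$, the same climbing argument as in Case 1 gives $P_\omega^k\psi_\omega(x)=e^{-\ell\theta'}P_\omega^{k-\ell}\psi_\omega(z)$ and analogously for $y,w$. This factors out the $e^{-\theta'\ell}$ term and reduces matters to estimating $|P_\omega^{k-\ell}\psi_\omega(z)-P_\omega^{k-\ell}\psi_\omega(w)|$ at the base. I would expand this as a sum over paired preimages $y_A,y_A'$ of $z,w$ lying in common full-branch elements $A$ of $\mathcal P_\omega^{k-\ell}$, and split the sum according to whether $A\subset\mathcal G_\omega$ (\emph{good}) or $A\subset\mathcal G_\omega^c$ (\emph{bad}).

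On each good $A$, I would split the summand difference $\frac{v_\omega(y_A)\psi_\omega(y_A)}{JF_\omega^{k-\ell}(y_A)}-\frac{v_\omega(y_A')\psi_\omega(y_A')}{JF_\omega^{k-\ell}(y_A')}$ into a Hölder piece, using $|\psi_\omega(y_A)-\psi_\omega(y_A')|\le|\psi_\omega|_h d_\omega(y_A,y_A')$ with the separation-time shift $s_\omega(y_A,y_A')\ge k+s_{\sigma^k\omega}(x,y)$ yielding $d_\omega(y_A,y_A')\le\gamma^k d_{\sigma^k\omega}(x,y)\le\gamma^{\zeta k}d_{\sigma^k\omega}(x,y)$, and a distortion piece, using the bounded distortion estimate \eqref{bdddn} to produce a factor $D_F d_{\sigma^k\omega}(x,y)|\psi_\omega|_\infty$ times the weight. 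Summing the weighted contributions over good $A$ and dominating the total weight via $|P_\omega^{k-\ell}\mathbf 1|_\infty\le C$ from \eqref{eq:Uni} accounts for the $C\gamma^{\zeta k}|\psi_\omega|_h$ and $D_FC|\psi_\omega|_\infty$ terms. On bad $A\subset\mathcal G_\omega^c$, I would bound each summand difference crudely by $|\psi_\omega|_\infty$ times its weight and use bounded distortion to convert the total weight into a multiple of $m_\omega(\mathcal G_\omega^c)$, which is $\le Ce^{-\bar\theta k}$ by Lemma~\ref{lem:rhon}. Choosing $k_0(\eta)$ large forces this to be at most $\eta$, producing the $\eta D_F|\psi_\omega|_h$ contribution after regrouping norms.

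The hard part will be keeping the factor $d_{\sigma^k\omega}(x,y)$ uniformly visible on the bad elements, where preimages are not a priori close and the natural crude estimate only involves $|\psi_\omega|_\infty$; this is where one has to exploit both the uniform tail bound (P5) and the distortion constant $D_F$ together, and is the source of the $\eta$-dependence of the threshold $k_0(\eta)$.
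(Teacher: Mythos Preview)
Your Case~1 and the reduction to the base in Case~2 are correct and match the paper. The genuine gap is in your treatment of the bad cylinders $A\subset\mathcal G_\omega^c$. The crude estimate $|\text{summand difference}|\lesssim |\psi_\omega|_\infty\cdot(\text{weight})$ does \emph{not} carry the factor $d_{\sigma^k\omega}(x,y)$, so that term cannot appear on the right-hand side of the lemma; moreover it lands in $|\psi_\omega|_\infty$, not $|\psi_\omega|_h$, and there is no inequality $|\psi_\omega|_\infty\lesssim|\psi_\omega|_h$ that would let you ``regroup norms'' as you propose. You correctly flag this as the hard point but leave it unresolved.

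The paper's fix is to reverse the order of your two decompositions: split \emph{every} summand (good and bad alike) into a H\"older piece plus a distortion piece first, and only afterwards split the H\"older piece according to $\mathcal G_\omega$ versus $\mathcal G_\omega^c$. The distortion piece carries $d_{\sigma^k\omega}(x,y)$ directly from \eqref{bdddn} and sums over all cylinders to $D_FC|\psi_\omega|_\infty\,d_{\sigma^k\omega}(x,y)$ via \eqref{eq:Uni}. For the H\"older piece one uses $|\psi_\omega(x')-\psi_\omega(y')|\le |\psi_\omega|_h\,d_\omega(x',y')$ and factors $d_\omega(x',y')=\rho_\omega^{(k)}(x')\,d_{\sigma^k\omega}(x,y)$, where $\rho$ records visits to the base; since $\rho\le 1$ always, the factor $d_{\sigma^k\omega}(x,y)$ is present on bad cylinders too. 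On $\mathcal G_\omega$ one then invokes $\rho\le\gamma^{\lfloor\zeta k\rfloor}$ from Lemma~\ref{lem:rhon} and bounds the remaining weighted sum by $|P^{k-\ell}\mathbf 1|_\infty\le C$, giving $C\gamma^{\zeta k}|\psi_\omega|_h$. On $\mathcal G_\omega^c$ one discards $\rho\le 1$ and instead converts the weight via $1/JF^{k-\ell}_\omega(x')\le D_F\,\leb_\omega(C_{k-\ell}(x'))$ into $D_F\,m_\omega(\mathcal G_\omega^c)$, which is $\le\eta$ for $k\ge k_0(\eta)$ by Lemma~\ref{lem:rhon}; this is the $\eta D_F|\psi_\omega|_h$ term. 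The point you were missing is that the preimages on bad cylinders \emph{are} at distance $\le d_{\sigma^k\omega}(x,y)$, so the H\"older bound (not a sup bound) is still the right tool there; the smallness comes entirely from the measure of $\mathcal G_\omega^c$, not from extra contraction of preimages.
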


\begin{proof} For  any $x, y\in \Delta_{\sigma^k\omega, \ell}$ and $k\le \ell$ we let  $x'=(F^{k}_\omega)^{-1}(x)$, $y'=(F^{k}_\omega)^{-1}(y)$. Then, both of the preimages are in the base at fiber $\omega$. We have 
\begin{equation*}
|P_\omega^k\psi_\omega(x)-P_\omega^k\psi_\omega(y)|= |e^{-\theta'k}\psi_\omega(F^{-k}_\omega x)-e^{-\theta'k}\psi_\omega(F^{-k}_\omega y)|\le e^{-\theta'k}\, d_{\sigma^k\omega}(x, y)\, |\psi_\omega|_h.
\end{equation*}
For $x, y\in \Delta_{\sigma^k\omega, \ell}$ and $\ell <k$ notice that both $x_0=F^{-\ell}_{\sigma^{k}\omega}x$ and $y_0= F^{-\ell}_{\sigma^{k}\omega}y$ belong to the same ground floor $\Delta_{\sigma^{k-\ell}\omega,0}$ and we get 
\begin{equation}\label{eq:k-ell}
|P_\omega^k\psi_\omega(x)- P_\omega^k\psi_\omega(y)|= e^{-\theta'\ell}|P_\omega^{k-\ell}\psi_\omega(x_0)- P_\omega^{k-\ell}\psi_\omega(y_0)|.
\end{equation}
Recall that if $F^{k-\ell}_\omega(x')=x_0$ then  every cylinder $C_{k-\ell}(x')$ containing $x'$ contains a unique pre-image $y'$ of $y_0$. Therefore,
\begin{equation}\label{eq:k-ell0}
\begin{split}
|P_\omega^{k-\ell}\psi_{\omega}(x_0)-P_\omega^{k-\ell}\psi_\omega(y_0)|
&=\left|\sum_{C_{k-\ell}}\frac{v_{\sigma^{k-\ell}\omega}(x')}{JF^{k-\ell}_\omega(x')}\psi_{\sigma^{k-\ell}\omega}(x')-\frac{v_{\sigma^{k-\ell}\omega}(y')}{JF^{k-\ell}_\omega(y')}\psi_{\sigma^{k-\ell}\omega}(y')\right|\\
&\le\sum_{C_{k-\ell}}\frac{v_{\sigma^{k-\ell}\omega}(x')}{JF^{k-\ell}_\omega(x')}
|\psi_{\sigma^{k-\ell}\omega}(x')-\psi_{\sigma^{k-\ell}\omega}(y')|\\
&+\sum_{C_{k-\ell}}|\psi_{\sigma^{k-\ell}\omega}(y')|
\left|\frac{v_{\sigma^{k-\ell}\omega}(x')}{JF^{k-\ell}_\omega(x')}-\frac{v_{\sigma^{k-\ell}\omega}(y')}{JF^{k-\ell}_\omega(y')}\right|,
\end{split} 
\end{equation}
where the previous sums are taken over all $(k-\ell)$-cylinders $C_{k-\ell}$, which contain paired preimages $x'$ and $y'$ (depending on $C_{k-\ell}$) for $x$ and $y$, respectively.
We estimate the second summand in \eqref{eq:k-ell0} as follows:
\begin{equation}\label{eq:k-ell1}
\begin{split}
&\sum_{C}\; |\psi_{\sigma^{k-\ell}\omega}(y')|\frac{v_{\sigma^{k-\ell}\omega}(x')}{JF^{k-\ell}_\omega(x')}
\left|\frac{JF^{k-\ell}_\omega(x')}{JF^{k-\ell}_\omega(y')}-1\right|\\ 
&\le D_F \; d_{\sigma^k\omega} (x,y)\sum_{C_{k-\ell}}|\psi_{\sigma^{k-\ell}\omega}(y')|\frac{v_{\sigma^{k-\ell}\omega}(x')}{JF^{k-\ell}_\omega(x')}\\
&\le D_F \; d_{\sigma^k\omega}(x,y)|\psi_{\sigma^{k-\ell}\omega}|_\infty|P_{\sigma^{k-\ell} \omega}^{k-\ell}{\bf 1}|_\infty.
\end{split}
\end{equation}
It remains to estimate
\begin{equation}\label{eq:k-ell2}
    \begin{split}
 \sum_{C_{k-\ell}} & \frac{v_{\sigma^{k-\ell}\omega}(x')}{JF^{k-\ell}_\omega(x')}|\psi_{\sigma^{k-\ell}\omega}(x')-\psi_{\sigma^{k-\ell}\omega}(y')| \\
 & \le 
 |\psi_{\sigma^{k-\ell}\omega}|_h
 \sum_{C_{k-\ell}}\frac{v_{\sigma^{k-\ell}\omega}(x')}{JF^{k-\ell}_\omega(x')} \; d_{\sigma^{k-\ell}\omega}(x' y') \\
 &\le |\psi_{\sigma^{k-\ell}\omega}|_h \; d_{\sigma^k\omega}(x, y)\left(
 \sum_{\substack{F^k(x')=x,\\x'\in \mathcal G_\omega}} 
 \frac{v_{\sigma^{k-\ell}\omega}(x')}{JF^{k-\ell}_\omega(x')}\rho(x')
 +\sum_{\substack{F^k(x')=x,\\x'\in \mathcal G_\omega^c}} \frac{v_{\sigma^{k-\ell}\omega}(x')}{JF^{k-\ell}_\omega(x')}\rho(x')
 \right)\\
& \le |\psi_{\sigma^{k-\ell}\omega}|_h \; d_{\sigma^k\omega}(x, y)
 \left( \gamma^{\lfloor\zeta k\rfloor}\cdot|P^{k-\ell}{\bf 1}|_\infty+ D_F\cdot m(\mathcal G_\omega^c)\right),
  \end{split}
 \end{equation}
where for $x\in \mathcal G_\omega$ we used the fact that $\rho^{(k)}(x)\le\gamma^{\lfloor\zeta k\rfloor}$ while for $x\in\mathcal G_\omega^c$ we used $\frac{1}{JF^{k-\ell}_{\omega}(x)}\le D_F \leb_\omega(C_{k-\ell}(x))$. Finally, we use \eqref{eq:k-ell0}, \eqref{eq:k-ell1} and \eqref{eq:k-ell2} in \eqref{eq:k-ell} to obtain
$$|P_\omega^k\psi(x) - P_\omega^k\psi(y)|\le 
d_{\sigma^k\omega}(x, y) \, e^{-\theta'\ell}\left[(C\gamma^{\zeta k}+\eta D_F)|\psi_\omega|_h+D_FC|\psi_\omega|_{\infty}\right],
$$
which proves the lemma.
\end{proof}

\begin{corollary}\label{cor:LY}
For each $\varepsilon>0$ there exist $k_0, N\in\NN$ such that for all $k>k_0$, $\psi=(\psi_\omega)_\omega \in\mF$ 
and $x,y \in\Delta_{\sigma^k\omega,\ell}$ we have
\begin{equation}\label{eq:LYC}
 |P_\omega^k\psi(x) - P_\omega^k\psi(y)|\le
 \begin{cases}
 \varepsilon |\psi_\omega|_h \, d_{\sigma^k\omega}(x,y) \text{ for } \ell\ge k;\\
 \varepsilon \|\psi_\omega\| \, d_{\sigma^k\omega}(x,y) \text{ for } N\le 2\ell< k;\\
 \left(\varepsilon|\psi_\omega|_h +D_FC|\psi_\omega|_\infty\right) \, d_{\sigma^k\omega}(x,y) \text{ for }  2\ell< N.
 \end{cases}
\end{equation}
 \end{corollary}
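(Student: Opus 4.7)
The plan is to deduce each of the three cases of Corollary \ref{cor:LY} directly from the two inequalities in Lemma \ref{le:estimateaux}, by choosing $k_0$, $N$, and the auxiliary parameter $\eta$ (from the same lemma) in terms of $\varepsilon$ in the right order. The point is simply that three different small quantities are available: the factor $e^{-\theta' k}$ (when $\ell \ge k$), the factor $e^{-\theta' \ell}$ (when $\ell$ is comparable to $k$), and the factor $C\gamma^{\zeta k}$ (when $k$ is large); the threshold $N$ is introduced to separate the regimes in which $e^{-\theta'\ell}$ is or is not useful.

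First I would dispose of the case $\ell \ge k$. Here Lemma \ref{le:estimateaux}(1) gives the identity with coefficient $e^{-\theta' k}|\psi_\omega|_h$, so it suffices to choose $k_0$ with $e^{-\theta' k_0} < \varepsilon$. Next, for the intermediate regime $N \le 2\ell < k$, I invoke Lemma \ref{le:estimateaux}(2), and the bound
\[
|P_\omega^k\psi_\omega(x) - P_\omega^k\psi_\omega(y)| \le d_{\sigma^k\omega}(x,y)\, e^{-\theta'\ell}\bigl[(C\gamma^{\zeta k}+\eta D_F)|\psi_\omega|_h + D_F C |\psi_\omega|_\infty\bigr],
\]
applied with $e^{-\theta'\ell} \le e^{-\theta' N/2}$, shows that both the $|\psi_\omega|_h$ and the $|\psi_\omega|_\infty$ coefficients can be made smaller than $\varepsilon$ by first choosing $N$ so that $e^{-\theta' N/2}(D_F C + C + D_F) < \varepsilon$, and then enlarging $k_0$ and shrinking $\eta$ so that $C\gamma^{\zeta k_0} + \eta D_F \le 1$. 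Grouping the bound as $\varepsilon(|\psi_\omega|_h + |\psi_\omega|_\infty) = \varepsilon \|\psi_\omega\|$ gives the required middle inequality.

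For the remaining low-floor regime $2\ell < N$, the factor $e^{-\theta'\ell}$ is no longer small since $\ell$ is bounded by $N/2$, so I only use $e^{-\theta'\ell} \le 1$ in Lemma \ref{le:estimateaux}(2). The $|\psi_\omega|_\infty$ contribution then gives the constant $D_F C$ that appears on the right-hand side of the third inequality, which is harmless because it is absorbed by the Lasota--Yorke iteration later in the paper. For the $|\psi_\omega|_h$ coefficient I still have $C\gamma^{\zeta k} + \eta D_F$, which was already arranged to be at most $\varepsilon$ by the choice of $k_0$ and $\eta$ made in the previous step. Taking the maximum of the two values of $k_0$ chosen along the way yields a single threshold that works for all three cases simultaneously.

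There is no real obstacle: the whole argument is a careful bookkeeping exercise on the constants of Lemma \ref{le:estimateaux}. The only subtle point, which motivates the introduction of $N$, is that one cannot simultaneously absorb the constant term $D_F C |\psi_\omega|_\infty$ into an $\varepsilon \|\psi_\omega\|$ estimate when $\ell$ is very small, because there is no decay factor available in $\ell$; splitting by the threshold $N$ is precisely what separates the regime in which this term is $O(\varepsilon)$ from the regime in which it must be kept as an $O(1)$ term.
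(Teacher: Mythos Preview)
Your approach is essentially the same as the paper's: both deduce the three cases by direct appeal to the two parts of Lemma~\ref{le:estimateaux}, choosing $N$ to exploit the factor $e^{-\theta'\ell}$ in the intermediate regime and choosing $\eta$ and $k_0$ to make $C\gamma^{\zeta k}+\eta D_F$ small for the low-floor regime. The only slip is that in your middle step you arrange $C\gamma^{\zeta k_0}+\eta D_F\le 1$, whereas in the final step you assert this quantity was already made $\le\varepsilon$; simply require $\le\varepsilon$ from the outset (this only strengthens the middle case) and the argument goes through verbatim.
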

\begin{proof}
Given $\varepsilon>0$ we choose $N\in\NN$ such that $e^{-\theta'N/2}\max\{1, C+D_F, \eta D_F\}<\varepsilon$. Then the first two inequalities follow from Lemma~\ref{le:estimateaux} %3.2 
for all $k>N$. For the third inequality we choose $\eta>0$ and $k_0\in \NN$ such that $C\gamma^{\zeta k_0}+\eta D_F<\varepsilon$.
\end{proof}

\subsection{Birkhoff cones and the Hilbert metric}  
 We define the following cone
\begin{equation}
\begin{split}
\C_{a,b,c}^\omega&=\{\psi_\omega\in\mF\mid \frac{1}{\mu_\omega(A_\omega)}\int_{A_\omega}\psi_\omega dm_\omega \le a\int\psi_\omega dm_\omega; \\
&\hskip 2cm|\psi_\omega|_h\le b \int \psi_\omega dm_\omega; \text{ for }x\in\mathcal G_\omega^c, \quad|\psi_\omega(x)|\le c\int \psi_\omega dm_\omega\}.
\end{split}
\end{equation}  
indexed by positive constants $a,b,c$. We consider the usual projective metric (Hilbert metric) on the cones:
for $\vp,\psi\in\C_{a,b,c}^\omega$ let 
\begin{align*}
&\mathfrak{a}(\vp, \psi)=\inf\{\rho>0\mid \rho\vp-\psi\in \C^\omega_{a,b,c}\},\\
&\mathfrak{b}(\vp, \psi)=\sup\{\zeta>0\mid \psi-\zeta\vp\in \C^\omega_{a,b,c}\},\\
&\Theta(\vp, \psi)=\log \left(\frac{\mathfrak{a}(\vp, \psi)}{\mathfrak{b}(\vp, \psi)}\right).
\end{align*}
Further define the cone of positive functions: 
$\C_+^{\omega}=\{\psi_\omega\in\mF\mid \psi_\omega>0\} $
with the corresponding projective metric:
\[
\Theta^+(\vp, \psi)=\log\frac{\sup \vp}{\inf \psi}\cdot\frac{\sup \psi}{\inf \vp}.
\]

Our goal is first to prove that for some $k$ (which will depend on $\omega$ in our current setting) $P^k_\omega\C^\omega(a, b, c)\subseteq\C^{\sigma^k\omega}(a, b, c)$, and to show that $P^k_\omega\C^\omega(a, b, c)$ has finite diameter, with respect to the Hilbert metric, in $\C^{\sigma^k\omega}(a, b, c)$. Then using, the result of Birkhoff \cite{Birk1,Birk2}, we obtain
\begin{equation}\label{eq:dist1}
\Theta\left(P^k_\omega\varphi_\omega,P^k_\omega\psi_\omega\right)\le\text{tanh}\left(\frac{d}{4}\right)\Theta(\varphi_{\sigma^k\omega},\psi_{\sigma^k\omega})\quad \text{ for all } \varphi_\omega, \psi_\omega\in \C^{\omega},
\end{equation}
 where $d$ is the diameter, with respect to the Hilbert metric, of $P^k_\omega\C^\omega(a, b, c)$. We stress that unlike the deterministic setting \cite{Liv1, Liv2, MD01} where the exponential decay of correlation becomes a consequence, in the current random setting (since the $k$ will depend on $\omega$) more effort is needed, see Remark \ref{rem:goodins} below.

Now, recall $\alpha=\frac{1}{2C^2}$, $\alpha'=\frac{3C^2}{2}$ from Lemma \ref{lem:mix}, where $1/C\le d\mu_\omega/d\leb_\omega\le C$. Set $0<\kappa<e^{-\theta'}<1$. We fix the following constants: 
\[
a> \frac1\kappa(\alpha'+\frac\alpha2), \;  c> \frac{C(4\alpha'\D a+\alpha)}{4\alpha'(\kappa-D_F\D_2)},
\; b>\frac{D_FCc}{\kappa-\eps}.
\]
The starting point to obtain contraction for the transfer operators is the following proposition, which claims that the family of cones is $\PP$-almost everywhere eventually preserved by transfer operators. More precisely: 

\begin{proposition}\label{prop:key}
There exists a random  variable $q_1:\Omega\to \NN$ such that for any $\omega\in\Omega$,  $k\ge q_1(\omega)$
\[
P^{k}_\omega\C^\omega(a,b,c)\subset\C^{\sigma^k\omega}(\kappa a,\kappa b, \kappa c).
\]
Moreover, if $\psi_{\sigma^k\omega}\in P^k_\omega\C^\omega(a,b,c)$ then there exists $L>0$ such that 
\[
\frac{1}{\mu_{\sigma^k\omega}(A_{\sigma^k\omega})}\int \psi_{\sigma^k\omega} dm_{\sigma^k\omega}\ge L\quad \text{ for all } \quad A_{\sigma^k\omega}\in \hat\mP_{\sigma^k\omega}.
\]
\end{proposition}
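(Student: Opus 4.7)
The plan is to verify the three defining inequalities of $\mathcal{C}^{\sigma^k\omega}(\kappa a,\kappa b,\kappa c)$ for $P^k_\omega\psi_\omega$, together with the auxiliary lower bound, whenever $\psi_\omega\in\mathcal{C}^\omega(a,b,c)$ and $k$ is large enough. First I would fix an integer $n$ once and for all, chosen so that $\mathcal{D}_1=\gamma^{\lfloor\zeta n\rfloor}$ and $\mathcal{D}_2\le Ce^{-\bar\theta n}$ are small enough to make the admissibility relations on $a,b,c$ stated just before the proposition simultaneously consistent, and work throughout with the partition $\hat{\mathcal{P}}_\omega$ attached to this $n$. Setting $q_1(\omega)=\max\{q_0(\omega),\,n,\,k_0\}$, where $q_0(\omega)$ is the random threshold from Lemma~\ref{lem:mix} and $k_0$ the uniform one from Corollary~\ref{cor:LY}, makes the $\omega$-dependence of $q_1$ entirely inherited from the fibrewise mixing. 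After normalizing $\int\psi_\omega\,dm_\omega=1$, the duality \eqref{eq:duality} gives $\int P^k_\omega\psi_\omega\,dm_{\sigma^k\omega}=1$, so the right-hand sides of the cone inequalities become constants.

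The central step is the mean inequality. For $A_{\sigma^k\omega}\in\bar{\mathcal{P}}_{\sigma^k\omega}$, rewrite $\int_{A_{\sigma^k\omega}}P^k_\omega\psi_\omega\,dm_{\sigma^k\omega}=\int_{F^{-k}_\omega A_{\sigma^k\omega}}\psi_\omega\,dm_\omega$ via \eqref{eq:duality} and split $\Delta_\omega=\mathcal{G}^c_\omega\sqcup\bigsqcup_{B_\omega\in\bar{\mathcal{P}}_\omega}B_\omega$. The piece over $\mathcal{G}^c_\omega$ contributes at most $c\mathcal{D}_2$ by the pointwise cone bound and Lemma~\ref{lem:rhon}. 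On each $B_\omega$, the bounds $|\psi_\omega|_h\le b$ and $\operatorname{diam}B_\omega\le\mathcal{D}_1$ give $\sup_{B_\omega}\psi_\omega\le\frac{1}{m_\omega(B_\omega)}\int_{B_\omega}\psi_\omega\,dm_\omega+b\mathcal{D}_1$, and the upper half of Lemma~\ref{lem:mix} supplies $m_\omega(B_\omega\cap F^{-k}_\omega A_{\sigma^k\omega})\le\alpha'\,m_\omega(B_\omega)\,\mu_{\sigma^k\omega}(A_{\sigma^k\omega})$. Summing and dividing by $\mu_{\sigma^k\omega}(A_{\sigma^k\omega})$ yields an upper bound of the form $\alpha'+\alpha' b\mathcal{D}_1 M+c\mathcal{D}_2/\mu_{\sigma^k\omega}(A_{\sigma^k\omega})$, with $M=\sup_\omega m_\omega(\Delta_\omega)$ finite by (P5); the admissibility choices for $a$ and $c$ precisely absorb these remainders below $\kappa a$. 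The same argument with $\alpha$ and $\inf_{B_\omega}\psi_\omega\ge\frac{1}{m_\omega(B_\omega)}\int_{B_\omega}\psi_\omega\,dm_\omega-b\mathcal{D}_1$ in place of $\alpha'$ and $\sup_{B_\omega}\psi_\omega$ supplies a strictly positive $L$ uniform in $\omega$ and in $A_{\sigma^k\omega}$, proving the second assertion.

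For the H\"older seminorm condition I would invoke Corollary~\ref{cor:LY} with $\varepsilon$ small: across the three regimes of $\ell$ it yields $|P^k_\omega\psi_\omega|_h\le\varepsilon|\psi_\omega|_h+D_FC|\psi_\omega|_\infty$, and since membership in $\mathcal{C}^\omega(a,b,c)$ gives $|\psi_\omega|_h\le b$ together with a derived bound $|\psi_\omega|_\infty\le Cc$ (from the pointwise cone bound on $\mathcal{G}^c_\omega$ together with the mean and H\"older bounds on each $B_\omega\in\bar{\mathcal{P}}_\omega$), the admissibility of $b$ closes $|P^k_\omega\psi_\omega|_h\le\varepsilon b+D_FCc\le\kappa b$. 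For the pointwise bound on $\mathcal{G}^c_{\sigma^k\omega}$, I would estimate $|P^k_\omega\psi_\omega(x)|\le|\psi_\omega|_\infty\,|P^k_\omega\mathbf{1}(x)|$ and refine \eqref{eq:Uni} at such points: a point $x\in\mathcal{G}^c_{\sigma^k\omega}$ either lies in a high floor, where the factor $e^{-\ell\theta'}$ in $P^k_\omega\mathbf{1}(x)$ is small, or corresponds to an orbit with too few returns, whose preimages contribute through cylinders of total $m_\omega$-mass at most $\mathcal{D}_2$. This additional smallness, combined with $|\psi_\omega|_\infty\le Cc$ and the admissibility of $c$, yields the $\kappa c$-bound.

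The principal obstacle is that the fibrewise mixing provided by Lemma~\ref{lem:mix} is only pointwise asymptotic in $\omega$, since the driving automorphism $\sigma$ is merely ergodic. Consequently $q_0(\omega)$, and hence $q_1(\omega)$, is genuinely random, and one cannot hope to establish cone invariance for a single deterministic $k_0$ independent of $\omega$. This non-uniformity is the very feature that later forces the ergodicity-based selection of contracting instants and drives the appearance of the random prefactor $C_\omega$ in Theorem~\ref{thm:main1}.
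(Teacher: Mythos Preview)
Your overall architecture matches the paper's, but two of your three cone verifications contain genuine gaps.

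\textbf{Mean inequality.} Your bound for the tail contribution is
\[
\frac{1}{\mu_{\sigma^k\omega}(A_{\sigma^k\omega})}\int_{\mathcal G^c_\omega\cap F^{-k}_\omega A_{\sigma^k\omega}}|\psi_\omega|\,dm_\omega
\;\le\;\frac{c\,\mathcal D_2}{\mu_{\sigma^k\omega}(A_{\sigma^k\omega})},
\]
and you then claim the admissibility of $a,c$ absorbs this. It cannot: the partition $\bar{\mathcal P}_{\sigma^k\omega}=\mathcal P^{(k)}_{\sigma^k\omega}|_{\mathcal G_{\sigma^k\omega}}$ refines with $k$, so $\mu_{\sigma^k\omega}(A_{\sigma^k\omega})$ is not bounded below and the right-hand side is not uniformly controlled by any fixed constant. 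The paper's remedy is to remember that $\mathcal G^c_\omega$ is itself an element of $\hat{\mathcal P}_\omega$, so Lemma~\ref{lem:mix} applies to the pair $(\mathcal G^c_\omega,A_{\sigma^k\omega})$ and yields
\(
m_\omega(\mathcal G^c_\omega\cap F^{-k}_\omega A_{\sigma^k\omega})\le \alpha'\,m_\omega(\mathcal G^c_\omega)\,\mu_{\sigma^k\omega}(A_{\sigma^k\omega}),
\)
after which the division by $\mu_{\sigma^k\omega}(A_{\sigma^k\omega})$ leaves the uniform term $\alpha' c\,\mathcal D_2$. The lower bound $L$ is obtained by the same device.

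\textbf{Pointwise bound on $\mathcal G^c_{\sigma^k\omega}$.} Your estimate $|P^k_\omega\psi_\omega(x)|\le |\psi_\omega|_\infty\,|P^k_\omega\mathbf 1(x)|$ with a refined bound on $P^k_\omega\mathbf 1(x)$ does not work at low floors. Membership $x\in\mathcal G^c_{\sigma^k\omega}$ is a condition on the \emph{forward} orbit of $x$ out of $\Delta_{\sigma^k\omega}$; it says nothing about the $F^k_\omega$-preimages of $x$ in $\Delta_\omega$, which can perfectly well lie in $\mathcal G_\omega$ and have total $m_\omega$-mass of order one. Hence $P^k_\omega\mathbf 1(x)$ need not be small, and the crude bound gives only $C\cdot|\psi_\omega|_\infty\ge C\cdot c>\kappa c$. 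The paper instead splits the preimages $x'$ according to whether $x'\in\mathcal G^c_\omega$ or $x'\in\mathcal G_\omega$: on $\mathcal G^c_\omega$ one uses $|\psi_\omega|\le c$ together with the small mass $\mathcal D_2$, while on $\mathcal G_\omega$ one exploits the sharper bound $|\psi_\omega(x')|\le a\mathcal D+b\mathcal D_1$ (coming from the first two cone conditions, your \eqref{eq:vero}-type estimate) and the uniform bound \eqref{eq:Uni}. The resulting estimate $C(a\mathcal D+b\mathcal D_1)+cD_F\mathcal D_2$ is what the admissibility inequality $c>\frac{C(\mathcal D a+\mathcal D_1 b)}{\kappa-D_F\mathcal D_2}$ is designed to place below $\kappa c$.

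Your treatment of the H\"older condition via Corollary~\ref{cor:LY} is correct and matches the paper.
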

\begin{proof}
For the first condition we have
\begin{equation}\label{eq:cond2}
    \begin{split}
        &\frac{1}{\mu_{\sigma^k\omega}(A_{\sigma^k \omega})}\int_{A_{\sigma^k\omega}}P^k_\omega\psi_\omega dm_{\sigma^k\omega}= \frac{1}{\mu_{\sigma^k\omega}(A_{\sigma^k \omega})}\int_{F_\omega^{-k}A_{\sigma^k\omega}}\psi_\omega dm_\omega\\
        &=\frac{1}{\mu_{\sigma^k\omega}(A_{\sigma^k \omega})}\sum_{A'_\omega\in\bar\mP_\omega}\int_{F_\omega^{-k}A_{\sigma^k\omega}\cap A'_\omega}\psi_\omega dm_\omega+ \frac{1}{\mu_{\sigma^k\omega}(A_{\sigma^k \omega})}\int_{F_\omega^{-k}A_{\sigma^k\omega}\cap \mathcal G_\omega^c}\psi_\omega dm_\omega\\
        &= (I)+(II).
    \end{split}
\end{equation}
Notice that for $x,y\in A'_\omega$,
$$\psi_\omega(y)-|\psi_\omega|_hd(x,y)\le\psi_\omega(x)\le \psi_\omega(y)+|\psi_\omega|_hd(x,y).$$
Integrating with respect to $y$ on $A'_{\omega}$, we get
\begin{equation}\label{eq:vero}
\int_{A'_{\omega}}\psi_\omega(y)dm_\omega-m_\omega(A'_{\omega})|\psi_\omega|_h\D_1\le m_\omega(A'_{\omega})\psi_\omega(x)\le \int_{A'_{\omega}}\psi_\omega(y)dm_\omega+m_\omega(A'_{\omega})|\psi_\omega|_h\D_1,    
\end{equation}
where $\D_1$ is the constant defined in \eqref{eq:dconst}. Let us consider a summand of $(I)$
% $$\frac{1}{\mu_{\sigma^k\omega}(A_{\sigma^k \omega})}\int_{F_\omega^{-k}A_{\sigma^k\omega}\cap A'_\omega}\psi_\omega dm\ge \frac{m(F_\omega^{-k}A_{\sigma^k\omega}\cap A'_\omega)}{\mu_{\sigma^k\omega}(A_{\sigma^k\omega})m (A'_\omega)}\left[\int_{A'_{\omega}}\psi_\omega(y)dm-m(A'_{\omega})|\psi_\omega|_h\D_1\right]$$
% and
$$\frac{1}{\mu_{\sigma^k\omega}(A_{\sigma^k \omega})}\int_{F_\omega^{-k}A_{\sigma^k\omega}\cap A'_\omega}\psi_\omega dm_\omega\le \frac{m_\omega(F_\omega^{-k}A_{\sigma^k\omega}\cap A'_\omega)}{\mu_{\sigma^k\omega}(A_{\sigma^k\omega})m_\omega (A'_\omega)}\left[\int_{A'_{\omega}}\psi_\omega(y)dm_\omega+m_\omega(A'_{\omega})|\psi_\omega|_h\D_1\right].$$
By \eqref{eq:mix} we obtain
$$\frac{1}{\mu_{\sigma^k\omega}(A_{\sigma^k \omega})}\int_{F_\omega^{-k}A_{\sigma^k\omega}\cap A'_\omega}\psi_\omega dm_\omega\le \alpha'\left[\int_{A'_{\omega}}\psi_\omega(y)dm_\omega+m_\omega(A'_{\omega})|\psi_\omega|_h\D_1\right] $$
and similarly, for the lower bound, we have
$$\frac{1}{\mu_{\sigma^k\omega}(A_{\sigma^k \omega})}\int_{F_\omega^{-k}A_{\sigma^k\omega}\cap A'_\omega}\psi_\omega dm_\omega\ge \alpha\left[\int_{A'_{\omega}}\psi_\omega(y)dm_\omega-m_\omega(A'_{\omega})|\psi_\omega|_h\D_1\right].$$
Using the above inequalities, the second cone condition and summing over $A'_\omega$ we get
\begin{equation}\label{eq:ess1}
\alpha\left[\sum_{A'_\omega\in\bar\mP_\omega}\int_{A'_{\omega}}\psi_\omega dm-\D_1b\int \psi_\omega dm_\omega \right]\le (I)\le \alpha'\left[\sum_{A'_\omega\in\bar\mP_\omega}\int_{A'_{\omega}}\psi_\omega dm_\omega+\D_1b\int \psi_\omega dm_\omega \right].
\end{equation}
For $(II)$ by using the third cone condition, \eqref{eq:mix} and the constant $\D_2$ from \eqref{eq:dconst}, we have
$$\frac{1}{\mu_{\sigma^k\omega}(A_{\sigma^k \omega})}\int_{F_\omega^{-k}A_{\sigma^k\omega}\cap \bar\G^c_\omega}\psi_\omega dm_\omega\le \frac{m_\omega(F_\omega^{-k}A_{\sigma^k\omega}\cap \bar\G^c_\omega)}{\mu_{\sigma^k\omega}(A_{\sigma^k \omega})m_\omega (\G^c_\omega)}\D_2 c\int\psi_\omega dm_\omega\le \alpha'\D_2 c\int\psi_\omega dm_\omega.$$
Moreover, for $x\in \G^c_\omega$, by integrating the inequality
$$\psi_\omega(x)\ge\frac{1}{m_\omega(\G^c_\omega)}\int_{\G^c_\omega}\psi_\omega dm_\omega-2\sup_{x\in \G^c_\omega}|\psi_\omega(x)|$$
and using similar arguments as in the above inequality we get
$$\frac{1}{\mu_{\sigma^k\omega}(A_{\sigma^k \omega})}\int_{F_\omega^{-k}A_{\sigma^k\omega}\cap \G^c_\omega}\psi_\omega dm_\omega\ge \alpha\int_{\G^c_\omega}\psi_\omega dm_\omega-2\alpha'\D_2c\int\psi_\omega dm_\omega.$$
Using the above inequalities, we get
\begin{equation}\label{eq:ess2}
\alpha\int_{\G^c_\omega}\psi_\omega dm_\omega-2\alpha'\D_2c\int\psi_\omega dm_\omega\le (II)\le   \alpha'\D_2 c\int\psi_\omega dm_\omega.  
\end{equation}
Using \eqref{eq:ess1}, \eqref{eq:ess2} in \eqref{eq:cond2} and duality \eqref{eq:duality} of $P_\omega$, we get
\begin{equation}\label{eq:finala}
\begin{split}
&(\alpha- \alpha'\D_1b-2\alpha'c\D_2) \int P^k_\omega\psi_\omega dm_{\sigma^k\omega}  \le\frac{1}{\mu_{\sigma^k\omega}(A_{\sigma^k \omega})}\int_{A_{\sigma^k\omega}}P^k_\omega\psi_\omega dm_{\sigma^k\omega}  \\
&\hskip 3cm\le (1+b\D_1+c\D_2)\alpha' \int P^k_\omega\psi_\omega dm_{\sigma^k\omega}. 
\end{split}
\end{equation}
To verify the second cone condition we use \eqref{eq:vero} and \eqref{eq:LYC}. Indeed, by \eqref{eq:vero} for $x\in A_\omega'\in\bar\mP_\omega$ and the second cone condition we have
\begin{equation}\label{eq:vero2}
    \psi_\omega(x)\le \frac{1}{ m_\omega(A'_{\omega})}\int_{A'_{\omega}}\psi_\omega(y)dm_\omega+|\psi_\omega|_h\D_1
\le [\D a+\D_1b]\int\psi_\omega dm_\omega,
\end{equation}
and consequently, by the third cone condition we get
$$\psi_\omega(x)\le \max\{\D a+\D_1b,c\}\int\psi_\omega dm_\omega,$$
where $\D=\esssup_\omega\sup_{A_\omega\in\bar\mP_\omega} \frac{\mu_\omega(A_\omega)}{m_\omega(A_\omega)}$. Now, this, \eqref{eq:LYC} and duality \eqref{eq:duality} imply

\begin{equation}\label{eq:tabarnak}
 \frac{|P_\omega^k\psi(x) - P_\omega^k\psi(y)|}{d(x,y)}\le  
 \begin{cases}
 \varepsilon b\int P_\omega^k\psi_\omega dm_{\sigma^k\omega} \text{ for } \ell\ge k;\\
 \varepsilon (b+\max\{\D a+\D_1b,c\}) \int P_\omega^k \psi_\omega dm_{\sigma^k\omega}  \text{ for } N\le 2\ell< k;\\
 \left(\varepsilon b +D_FC\max\{\D a+\D_1b,c\}\right) \int P_\omega^k \psi_\omega dm_{\sigma^k\omega} \text{ for }  2\ell< N.
 \end{cases}
\end{equation}
Finally, we verify the third cone condition. For $x\in\mathcal G_{\sigma^k\omega}^c$ and for $k\le\ell$ notice that $F_\omega^{-k}(x)\in \G^c_{\omega}$. Therefore, \begin{equation}\label{eq:four1}
|P^k_\omega\psi_\omega(x)|=e^{-\theta'\ell}\psi_\omega(F_\omega^{-k}(x))\le e^{-\theta'\ell}c\int \psi_\omega dm_\omega.
\end{equation}
For $k> \ell$, let $q=k-\ell$. Let $x\in\Delta_{\sigma^k\omega,\ell}\cap\G^c_{\sigma^k\omega}$ and $x_0=F^{-\ell}_{\sigma^q\omega}x$. We have 
\begin{equation}\label{eq:four2}
\begin{split}
|P^q_\omega\psi_\omega(x)|=e^{-\theta'\ell}\left(\sum_{\underset{x'\in\G^c_\omega}{F^q_\omega(x')=x_0}}\frac{v(x')\psi_\omega(x')}{JF^q_\omega(x')}+\sum_{\underset{x'\notin\G^c_\omega}{F^q_\omega(x')=x_0}}\frac{v(x')\psi_\omega(x')}{JF^q_\omega(x')}\right).\\
(I)+(II).
\end{split}
\end{equation}
Notice that $(I)$ can be estimated by
$$c\int\psi_\omega dm_\omega\sum_{\underset{x'\in\G^c_\omega}{C_q(x')}}D_F v_\omega(x')\leb_\omega(C_q(x')) \le c\D_2 D_F \int\psi_\omega dm_\omega.$$
By \eqref{eq:vero2} and \eqref{eq:Uni} $(II)$ can be bounded by
$$C(a\D+b\D_1)\int\psi_\omega dm_\omega.$$
Using the above estimates in \eqref{eq:four2}, we get
$$|P^q_\omega\psi_\omega(x)|\le e^{-\theta'\ell}[C(a\D+b\D_1)+c\D_2 D_F ]\int\psi_\omega dm_\omega.$$
Consequently,
\begin{equation}\label{eq:four3}
|P^k_\omega\psi_\omega(x)|\le[C(a\D+b\D_1)+c\D_2 D_F ]\int P^k_\omega\psi_\omega dm_{\sigma^k\omega}.
\end{equation}
Recall $\varepsilon$ in \eqref{eq:tabarnak} which is arbitrarily small. We now fix $\varepsilon  <\kappa$ and  choose $k\ge q_1(\omega)=\max\{k_0, q_0(\omega)\}$, where $q_0$ is as in Lemma \ref{lem:mix} and $k_0$ is from Corollary \ref{cor:LY}, large enough so that 
\begin{equation}\label{eq:1st}
\alpha'\D_1b<\alpha/4; \quad\alpha'c\D_2<\alpha/4;\quad \text{ and }\quad  D_F\D_2<\kappa.
\end{equation}
Then, the lower bound of \eqref{eq:finala}, for all $A_{\sigma^k\omega}\in \hat\mP_{\sigma^k\omega}$ we have
$$\frac{1}{\mu_{\sigma^k\omega}(A_{\sigma^k\omega})}\int \psi_{\sigma^k\omega} dm_{\sigma^k\omega}\ge L :=\alpha- \alpha'\D_1b-2\alpha'c\D_2>0.$$
Using the upper bound of \eqref{eq:finala} and \eqref{eq:1st}, choose $a$ so that $(1+b\D_1+c\D_2)\alpha'\le \kappa a$, we obtain
\begin{equation}\label{eq:2nd}
    a>\frac{1}{\kappa}(\alpha'+\frac{\alpha}{2}).
\end{equation}
Next, using \eqref{eq:four3} and \eqref{eq:1st} choose $c D_F\D_2+C(\D a+\D_1b)< \kappa c$; i.e.,

\begin{equation}\label{eq:3rd}
c>\frac{C(\D a+\D_1b)}{\kappa- D_F\D_2}.
\end{equation}
Finally, using \eqref{eq:tabarnak} and \eqref{eq:3rd} choose $\varepsilon b +D_FCc< \kappa b$; i.e.,
$$ b>\frac{D_FCc}{\kappa-\varepsilon}.$$
This completes the proof of the proposition.
\end{proof}

Next we show that $P^k_\omega\C^\omega(a, b, c)$ has finite diameter, with respect to the Hilbert metric, in $\C^{\sigma^k\omega}(a, b, c)$. %e^{-\theta'} 
\begin{lemma}\label{le:finite_diam}
Let  $q_1:\Omega\to \NN$ be as in Proposition \ref{prop:key}. For  $k\ge q_1(\omega)$ the projective diameter of $P_\omega^k\C^{\omega}(a,b,c)$ in $\C^{\sigma^k\omega}(a,b,c)$ is bounded by 
\[
\log\frac{d}{\min\{d, 1-\kappa\}}, 
\]
where $d=\max\left\{\frac{4\alpha'+2\alpha}{\alpha}, \frac{1+\kappa}{1-\kappa}\right\}$, $\alpha, \alpha'$ are as in Lemma \ref{lem:mix}, and $\kappa<e^{-\theta'}$.
\end{lemma}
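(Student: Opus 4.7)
The plan is to take arbitrary $\vp,\psi\in P_\omega^k\C^\omega(a,b,c)$ and estimate directly the two scalars $\mathfrak a(\vp,\psi)$ and $\mathfrak b(\vp,\psi)$ entering the definition of the Hilbert metric $\Theta$ on $\C^{\sigma^k\omega}(a,b,c)$. The starting point is Proposition \ref{prop:key}: since $P_\omega^k\C^\omega(a,b,c)\subset \C^{\sigma^k\omega}(\kappa a,\kappa b,\kappa c)$, for every $A\in\hat\mP_{\sigma^k\omega}$ and every $\phi\in\{\vp,\psi\}$ one has the strengthened averaging bounds $L\int\phi\,dm_{\sigma^k\omega}\le \frac{1}{\mu_{\sigma^k\omega}(A)}\int_A\phi\,dm_{\sigma^k\omega}\le \kappa a\int\phi\,dm_{\sigma^k\omega}$, together with $|\phi|_h\le \kappa b\int\phi\,dm_{\sigma^k\omega}$ and $|\phi(x)|\le\kappa c\int\phi\,dm_{\sigma^k\omega}$ on $\G_{\sigma^k\omega}^c$. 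By scale invariance of the Hilbert metric I normalize so that $\int\vp\,dm_{\sigma^k\omega}=\int\psi\,dm_{\sigma^k\omega}=1$, and abbreviate $X_A=\frac{1}{\mu_{\sigma^k\omega}(A)}\int_A\vp\,dm_{\sigma^k\omega}$, $Y_A=\frac{1}{\mu_{\sigma^k\omega}(A)}\int_A\psi\,dm_{\sigma^k\omega}$.

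To bound $\mathfrak a(\vp,\psi)$ from above I seek the smallest $\rho>1$ such that $\rho\vp-\psi\in\C^{\sigma^k\omega}(a,b,c)$, and verify the three defining cone inequalities in turn. After rearrangement the averaging inequality takes the form $\rho\ge (a-Y_A)/(a-X_A)$ with $X_A,Y_A\in[L,\kappa a]$, whose worst case is $(a-L)/(a(1-\kappa))$; translating the explicit constraints $\alpha'\mathcal D_1 b<\alpha/4$, $\alpha'c\mathcal D_2<\alpha/4$ and $a>(\alpha'+\alpha/2)/\kappa$ supplied by the proof of Proposition \ref{prop:key} shows this ratio is dominated by $(4\alpha'+2\alpha)/\alpha$. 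The Lipschitz and tail inequalities, via the triangle inequalities $|\rho\vp-\psi|_h\le (\rho+1)\kappa b$ and $|\rho\vp(x)-\psi(x)|\le (\rho+1)\kappa c$ on $\G_{\sigma^k\omega}^c$, both demand $\rho\ge (1+\kappa)/(1-\kappa)$. Taking the maximum of the two requirements yields $\mathfrak a(\vp,\psi)\le d$.

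To bound $\mathfrak b(\vp,\psi)$ from below I seek the largest $\zeta<1$ for which $\psi-\zeta\vp\in\C^{\sigma^k\omega}(a,b,c)$. Positivity of the integral forces $\zeta<1$, the averaging inequality gives $\zeta\le a(1-\kappa)/(a-L)$, and the Lipschitz and tail inequalities give $\zeta\le (1-\kappa)/(1+\kappa)$. Taking the minimum and loosening it to the single numerical lower bound $\min\{d,1-\kappa\}$, which is sufficient for the statement, produces $\mathfrak b(\vp,\psi)\ge\min\{d,1-\kappa\}$. Combining the two estimates gives $\Theta(\vp,\psi)=\log(\mathfrak a/\mathfrak b)\le \log(d/\min\{d,1-\kappa\})$ uniformly in $\vp,\psi\in P_\omega^k\C^\omega(a,b,c)$, which is the desired projective diameter bound.

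The main technical point is the bookkeeping that leads to the numerical ceiling $(4\alpha'+2\alpha)/\alpha$ inside $d$: one has to combine the implicit constraints \eqref{eq:1st}--\eqref{eq:3rd} that were used to select $a,b,c$ in the proof of Proposition \ref{prop:key} in order to bound the averaging ratio $(a-L)/(a(1-\kappa))$ uniformly in $\omega$ and in the random instant $q_1(\omega)$. By contrast, the Lipschitz and tail estimates are routine triangle-inequality manipulations, and the $(1+\kappa)/(1-\kappa)$ contribution to $d$ and the $1-\kappa$ denominator appear transparently once the normalization $\int\vp=\int\psi=1$ is in place.
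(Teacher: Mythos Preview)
Your approach is essentially the paper's: both of you pick $\vp,\psi\in P_\omega^k\C^\omega(a,b,c)\subset\C^{\sigma^k\omega}(\kappa a,\kappa b,\kappa c)$, normalize, and verify each cone condition for $\rho\vp-\psi$ (resp.\ $\psi-\zeta\vp$) to produce explicit bounds on $\mathfrak a$ and $\mathfrak b$. The Lipschitz and tail estimates and the resulting $(1+\kappa)/(1-\kappa)$ contribution are handled identically.

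There is one bookkeeping discrepancy worth flagging. In the paper the first cone condition is unpacked into \emph{two} requirements: positivity of the local averages, $\frac{1}{\mu(A)}\int_A(\rho\vp-\psi)\,dm>0$, and the upper bound $\frac{1}{\mu(A)}\int_A(\rho\vp-\psi)\,dm\le a\int(\rho\vp-\psi)\,dm$. The first of these forces $\rho\ge Y_A/X_A$, and after inserting the two-sided bounds from \eqref{eq:finala} together with \eqref{eq:1st} this is exactly what produces the term $(4\alpha'+2\alpha)/\alpha$ in $d$. Your ``averaging inequality'' treats only the second requirement, and the quantity $(a-L)/(a(1-\kappa))$ you obtain is in fact bounded simply by $1/(1-\kappa)$ (since $L>0$), hence already absorbed by $(1+\kappa)/(1-\kappa)$; it need not be $\le (4\alpha'+2\alpha)/\alpha$ under the stated constraints. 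So your attribution of where $(4\alpha'+2\alpha)/\alpha$ enters is off and the positivity check is missing from your plan. This does not break the final conclusion, because $d$ is by definition at least $(4\alpha'+2\alpha)/\alpha$, and the paper's computation shows that $\rho=d$ then also satisfies positivity; but you should include that verification rather than claim it emerges from the upper-bound rearrangement.
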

\begin{proof}
Let $\vp_{\sigma^k\omega}, \psi_{\sigma^k\omega}\in P^k_\omega\C^\omega(a, b, c)$ and $\varrho>0$ be such that $\varrho\vp_{\sigma^k\omega}-\psi_{\sigma^k\omega} \in \C^{\sigma^k\omega}(a, b, c)$.   Then 
\[
0<\frac{1}{\mu_{\sigma^k\omega}(A_{\sigma^k\omega})}\int_{A_{\sigma^k\omega}} (\varrho\vp_{\sigma^k\omega}-\psi_{\sigma^k\omega}) dm_{\sigma^k\omega}\le \int (a\varrho\vp_{\sigma^k\omega}-a\psi_{\sigma^k\omega}) dm_{\sigma^k\omega}.
\]
The above inequalities are satisfied if we choose $\varrho$ so that
\begin{equation*}
\varrho\ge \sup_{A_{\sigma^k\omega}\in \bar\mP_{\sigma^k\omega}}\frac{\int_{A_{\sigma^k\omega}} \psi_{\sigma^k\omega} dm_{\sigma^k\omega}}{\int_{A_{\sigma^k\omega}} \vp_{\sigma^k\omega}dm_{\sigma^k\omega}};\quad 
\varrho\ge \frac{a\int \psi_{\sigma^k\omega} dm_{\sigma^k\omega} -1/\mu_{\sigma^k\omega}(A_{\sigma^k\omega})\int_{A_{\sigma^k\omega}}\psi_{\sigma^k\omega}dm_{\sigma^k\omega}}{ a\int \vp_{\sigma^k\omega} dm_{\sigma^k\omega} -1/\mu_{\sigma^k\omega}(A_{\sigma^k\omega})\int_{A_{\sigma^k\omega}}\vp_{\sigma^k\omega}dm_{\sigma^k\omega}}.
\end{equation*}
To obtain lower bound for the first term we substitute  \eqref{eq:1st} and \eqref{eq:2nd} into \eqref{eq:finala} we obtain 
\begin{equation}\label{eq:rho21}
\varrho \ge \frac{4\alpha'+2\alpha}{\alpha}\frac{\int \psi_{\sigma^k\omega} dm_{\sigma^k\omega} }{\int \vp_{\sigma^k\omega} dm_{\sigma^k\omega} }.
\end{equation}
 Since $\psi\ge 0$ we have 
 \[
 a\int \psi_{\sigma^k\omega} dm_{\sigma^k\omega} -1/\mu_{\sigma^k\omega}(A_{\sigma^k\omega})\int_{A_{\sigma^k\omega}}\psi_{\sigma^k\omega}dm_{\sigma^k\omega}\le a \int \psi_{\sigma^k\omega} dm_{\sigma^k\omega}. 
 \]
 Using again by \eqref{eq:finala} and \eqref{eq:2nd}
 \[
 a\int \vp_{\sigma^k\omega} dm_{\sigma^k\omega} -1/\mu_{\sigma^k\omega}(A_{\sigma^k\omega})\int_{A_{\sigma^k\omega}}\vp_{\sigma^k\omega}dm_{\sigma^k\omega}\ge a(1-\kappa)\int \vp_{\sigma^k\omega} dm_{\sigma^k\omega}.
 \]
 Thus, 
\begin{equation}\label{eq:rho22}
\varrho \ge \frac{1}{1-\kappa}\frac{\int \psi_{\sigma^k\omega} dm_{\sigma^k\omega} }{\int \vp_{\sigma^k\omega} dm_{\sigma^k\omega} }.
\end{equation}
For the third cone condition we need
\begin{equation}\label{eq:cont3}
\begin{split}
-b \int_{A_{\sigma^k\omega}} (\varrho\vp_{\sigma^k\omega}-\psi_{\sigma^k\omega}) dm_{\sigma^k\omega}\le &
\frac{\varrho(\vp_{\sigma^k\omega}(x)-\vp_{\sigma^k\omega}(y))-(\psi_{\sigma^k\omega}(x)-\psi_{\sigma^k\omega}(y))}{d_{\sigma^k\omega}(x, y)}\\
&\quad\le b \int_{A_{\sigma^k\omega}} (\varrho\vp_{\sigma^k\omega}-\psi_{\sigma^k\omega}) dm_{\sigma^k\omega}.
\end{split}
\end{equation}

Since 
$$\frac{|\varrho(\vp_{\sigma^k\omega}(x)-\vp_{\sigma^k\omega}(y))|+|(\psi_{\sigma^k\omega}(x)-\psi_{\sigma^k\omega}(y))|}{d_{\sigma^k\omega}(x, y)}
\quad\le \kappa b \int_{A_{\sigma^k\omega}} (\varrho\vp_{\sigma^k\omega}+\psi_{\sigma^k\omega}) dm_{\sigma^k\omega}.$$
Thus, comparing this with the right hand side of \eqref{eq:cont3}, we choose $\varrho$ to satisfy
$$\kappa b\int_{A_{\sigma^k\omega}}(\varrho\vp_{\sigma^k\omega}+\psi_{\sigma^k\omega})\le b \int_{A_{\sigma^k\omega}} (\varrho\vp_{\sigma^k\omega}-\psi_{\sigma^k\omega});$$
i.e.,
\begin{equation}\label{eq:34rho}
\varrho\ge\frac{1+\kappa}{1-\kappa}\frac{\int \psi_{\sigma^k\omega} dm_{\sigma^k\omega} }{\int \vp_{\sigma^k\omega} dm_{\sigma^k\omega} }.
\end{equation}
For the fourth cone condition, we need for any  $x\in \G^c_{\sigma^k\omega}$
\begin{equation}\label{eq:cont4}
|(\varrho\vp_{\sigma^k\omega}-\psi_{\sigma^k\omega})(x)|\le c\int \varrho\vp_{\sigma^k\omega}-\psi_{\sigma^k\omega} dm_{\sigma^k\omega}.
\end{equation}
Again, comparing the following estimate with the right hand side of \eqref{eq:cont4}
$$|\varrho\vp_{\sigma^k\omega}(x)|+|\psi_{\sigma^k\omega}(x)|\le \kappa c\varrho\int \vp dm_{\sigma^k\omega}+ c\kappa\psi dm_{\sigma^k\omega}$$
it is enough to choose $\varrho$ as in \eqref{eq:34rho}.
Thus, by \eqref{eq:rho21}, \eqref{eq:rho22} and \eqref{eq:34rho}, we choose $$\varrho\ge\max\Big\{\frac{4\alpha'+2\alpha}{\alpha}, \frac{1+\kappa}{1-\kappa},\Big\}$$
Now similarly, for $\vp_{\sigma^k\omega}, \psi_{\sigma^k\omega}\in P^k_\omega\C^\omega(a, b, c)$ we obtain $\varsigma>0$ such that $\psi_{\sigma^k\omega}-\varsigma\vp_{\sigma^k\omega} \in \C^{\sigma^k\omega}(a, b, c)$ 
$$\varsigma\le\min\Big\{\frac{\alpha}{4\alpha'+2\alpha}, \frac{1-\kappa}{1+\kappa}, 1-\kappa\Big\}.$$
\end{proof}
\subsection{Concatenations of transfer operators} 
\begin{remark}\label{rem:goodins}
Even though the transfer operators eventually preserve the family of cone fields and their images have finite diameter (recall Proposition~\ref{prop:key} and Lemma~\ref{le:finite_diam}), one cannot conclude directly that the functions $P^k_{\sigma^{-k}\omega}(\psi_{\sigma^{-k}\omega})$, where $\psi_{\sigma^{-k} \omega}\in {\mathcal C}^{\sigma^{-k}\omega}(a,b,c)$,  converge exponentially fast to $h_\omega$ as $k\to+\infty$. The reason being that the sequence of instants at which one observes the contraction (determined by the random variable $q_1(\cdot)$
in Proposition~\ref{prop:key}) could be sparse along the orbits of $\omega$ by the base map. 
In the following, we show that this is not the case.
\end{remark}
As $q_1(\omega)$ is finite for $\PP$-almost every $\omega$,
for every $\eps>0$ there exists $M=M(\eps)\in\NN$ such  that
\begin{equation}\label{eq:finiteae}
\PP(\{\omega: q_1(\omega)>M\})<\eps.
\end{equation}
\begin{lemma}\label{lem:erg} 
There exists an integer valued random variable $0\le r(\omega) \le M$ such that 
\[
\lim_{n\to\infty}\frac{1}{n}\sum_{k=0}^{n-1}{\bf 1}_{\{q_1 \le M\}}\circ \sigma^{Mk+r}\omega\ge (1-\eps)
\]
for $\PP$-almost every $\omega\in \Omega$.
\end{lemma}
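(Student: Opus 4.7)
The plan is to exploit the ergodicity of $\sigma$ to replace the (possibly non-ergodic) dynamics of the power $\sigma^M$. Set $A=\{q_1\le M\}$, so that by \eqref{eq:finiteae} one has $\PP(A)\ge 1-\eps$. Since $\sigma$ is ergodic, Birkhoff's theorem applied to $\mathbf{1}_A$ yields
\[
\lim_{n\to\infty}\frac{1}{Mn}\sum_{j=0}^{Mn-1}\mathbf{1}_A(\sigma^j\omega)=\PP(A)\ge 1-\eps
\]
for $\PP$-almost every $\omega\in\Omega$.

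Next, I would regroup this Cesaro average along the $M$ residue classes modulo $M$. Writing
\[
\frac{1}{Mn}\sum_{j=0}^{Mn-1}\mathbf{1}_A(\sigma^j\omega)=\frac{1}{M}\sum_{r=0}^{M-1}\frac{1}{n}\sum_{k=0}^{n-1}\mathbf{1}_A(\sigma^{Mk+r}\omega),
\]
each inner average converges $\PP$-almost everywhere as $n\to\infty$ by Birkhoff's theorem applied to the measure preserving transformation $\sigma^M$ (which is not assumed ergodic). Denote the resulting limits by $L_r(\omega)$; they are measurable functions of $\omega$. From the identity above one then obtains
\[
\frac{1}{M}\sum_{r=0}^{M-1}L_r(\omega)\ge 1-\eps\qquad\text{for $\PP$-almost every }\omega.
\]

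The final step is a pigeonhole argument. For each $\omega$ in the full-measure set where the previous inequality holds, the arithmetic mean of the $L_r(\omega)$, $0\le r\le M-1$, is at least $1-\eps$, so at least one index $r$ must satisfy $L_r(\omega)\ge 1-\eps$. Define
\[
r(\omega):=\min\bigl\{0\le r\le M-1\,:\,L_r(\omega)\ge 1-\eps\bigr\},
\]
extending it arbitrarily (say by $0$) on the null set where the convergence fails. Measurability of $r(\omega)$ follows from the measurability of each $L_r$, since the set $\{L_r\ge 1-\eps\}$ is measurable and $r(\omega)$ is the minimum over a finite family of such sets. By construction $0\le r(\omega)\le M-1<M$ and
\[
\lim_{n\to\infty}\frac{1}{n}\sum_{k=0}^{n-1}\mathbf{1}_{\{q_1\le M\}}(\sigma^{Mk+r(\omega)}\omega)=L_{r(\omega)}(\omega)\ge 1-\eps
\]
for $\PP$-almost every $\omega$.

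No serious obstacle is anticipated: the only subtlety is that $\sigma^M$ need not be ergodic, which is precisely what forces the $r$ in the statement to depend on $\omega$. The ergodicity of $\sigma$ together with the averaging over residue classes compensates for this, and the bound $0\le r\le M$ comes for free from the pigeonhole selection.
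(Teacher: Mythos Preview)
Your proposal is correct and follows essentially the same route as the paper: apply Birkhoff's theorem for the ergodic map $\sigma$ to obtain the lower bound on the full Ces\`aro average, decompose it as an average over residue classes modulo $M$, use Birkhoff for the (merely measure-preserving) map $\sigma^M$ to guarantee convergence of each inner average, and then select $r(\omega)$ by pigeonhole. Your version is in fact slightly more careful in that you specify $r(\omega)$ as a minimum and verify its measurability, points the paper leaves implicit.
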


\begin{proof}
Since $(\sigma, \PP)$ is ergodic, by the choice of $\eps$ and $M$ in ~\eqref{eq:finiteae} we have 
\begin{equation}\label{eq:eps/2}
\lim_{n\to \infty}\frac{1}{Mn}\sum_{j=0}^{Mn-1}{\bf 1}_{\{q_1\le M\}} (\sigma^j\omega) \ge (1-{\eps})
\end{equation}
for $\PP$-almost every $\omega\in \Omega$.
Since, $\sigma^M$ preserves $\PP$,  the limit 
\[
\lim_{n\to \infty}\frac{1}{n}\sum_{k=0}^{n-1}{\bf 1}_{\{q_1\le M\}}\circ \sigma^{Mk+r_0}\omega
\]
exists for $\PP$-almost every $\omega\in\Omega$ and every integer $0 \le r_0 \le M$. Thus, by writing, 
\[
\frac{1}{Mn}\sum_{j=0}^{Mn-1}{\bf 1}_{\{q_1\le M\}}\circ \sigma^j\omega=
\frac{1}{M}\sum_{r=0}^{M-1} \Big[ \frac{1}{n}\sum_{k=0}^{n-1}{\bf 1}_{\{q_1\le M\}}\circ \sigma^{Mk+r}\omega\Big]
\]
we conclude, for $\PP$-almost every $\omega\in \Omega,$ the existence of $0\le r(\omega) \le M$ with the desired property. 
\end{proof}

%Recall that $q_1(\omega)$ is finite for $\PP$-almost every $\omega\in \Omega$. 
Define recursively the following sequence: 
\begin{align*}
&t_1(\omega)= %q_1(\omega)+
\min\{jM+r(\omega)\ge {q_1(\omega)}\mid q_1(\sigma^{jM+r(\omega)}\omega)\le M\},\\
&t_i(\omega)=\min\{jM+r(\omega)>t_{i-1}(\omega)\mid q_1(\sigma^{{jM+r(\omega)}}\omega)\le M\}, \text{ for } i\ge 2.
\end{align*}
Notice that $t_i\ge i$ by definition, hence, $t_i\to \infty $ as $i\to \infty$.

\begin{lemma}\label{lem:erg2}

For $\PP$-almost every $\omega\in \Omega$ there exists $q_3(\omega)\in \NN$ such that for each  $n\ge q_3(\omega)$ the following holds: if $t_1(\omega) < t_2(\omega) < \dots < t_s(\omega)\le n < t_{s+1}(\omega)$ then 
$s\ge \Big\lfloor\frac{n}{M} \Big\rfloor(1-2\eps)$.
\end{lemma}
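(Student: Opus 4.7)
The plan is to count directly the number of good instants contributing to $(t_i)_i$ using the lower density bound of Lemma~\ref{lem:erg}. Writing $r = r(\omega)$ for brevity, the construction of the sequence $(t_i)$ shows that the integer $s$ appearing in the statement equals
$$
s = \#\bigl\{j \ge 0 \colon jM + r \le n,\ jM + r \ge q_1(\omega),\ q_1(\sigma^{jM+r}\omega) \le M\bigr\}.
$$
Thus $s$ is a partial Birkhoff sum of ${\bf 1}_{\{q_1 \le M\}}$ along the subsequence $(\sigma^{jM+r}\omega)_j$, from which only the finitely many indices $j$ with $jM + r < q_1(\omega)$ have been removed.

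The key steps are then the following. First, set $N := \lfloor (n-r)/M \rfloor + 1$ (the number of integers $j \ge 0$ with $jM + r \le n$) and let $j_0(\omega)$ be the smallest non-negative integer satisfying $j_0 M + r \ge q_1(\omega)$. Since $q_1(\omega)$ is $\PP$-a.s. finite, so is $j_0(\omega)$, and
$$
s \;\ge\; \sum_{j=0}^{N-1} {\bf 1}_{\{q_1 \le M\}}\bigl(\sigma^{jM+r}\omega\bigr) \;-\; j_0(\omega).
$$
Second, invoke Lemma~\ref{lem:erg}: for $\PP$-almost every $\omega$ there is $n_1(\omega)\in\NN$ such that, for every $N \ge n_1(\omega)$,
$$
\sum_{j=0}^{N-1} {\bf 1}_{\{q_1 \le M\}}\bigl(\sigma^{jM+r}\omega\bigr) \;\ge\; N\,(1 - 3\eps/2),
$$
where the extra slack $\eps/2$ is borrowed to absorb the finite-$N$ error in the ergodic average.

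Combining these bounds and using $N \ge \lfloor n/M \rfloor - 1$ gives
$$
s \;\ge\; \lfloor n/M \rfloor\,(1 - 2\eps) \;+\; \Bigl[(\eps/2)\lfloor n/M \rfloor - (1 - 3\eps/2) - j_0(\omega)\Bigr].
$$
The bracketed quantity is non-negative as soon as $\lfloor n/M \rfloor$ exceeds a threshold determined by $j_0(\omega)$; since $j_0(\omega)$ is $\PP$-a.s. finite, this threshold is finite almost surely. Taking $q_3(\omega)$ to be $M$ times the maximum of $n_1(\omega)$ and this threshold yields the desired inequality for every $n \ge q_3(\omega)$.

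I expect no genuine obstacle here: the proof is a bookkeeping argument that separates the asymptotic density contribution, controlled by Lemma~\ref{lem:erg}, from the fixed initial loss of at most $j_0(\omega)$ indices, and then absorbs that loss into the random time $q_3(\omega)$. The only point requiring care is that $q_3(\omega)$ is built out of $\PP$-a.s. finite random variables ($q_1(\omega)$ through $j_0(\omega)$, and $n_1(\omega)$ from the ergodic convergence), so it is itself well-defined on a full-measure set.
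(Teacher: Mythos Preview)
Your argument is correct and follows essentially the same route as the paper: bound $s$ from below by the partial Birkhoff sum of ${\bf 1}_{\{q_1\le M\}}$ along the subsequence $(\sigma^{jM+r}\omega)_j$, invoke Lemma~\ref{lem:erg} to control that sum, and absorb the finitely many lost initial indices into the random threshold $q_3(\omega)$. Your version is in fact more carefully quantified than the paper's (which is quite terse and ends with the slightly weaker bound $s\ge(1-2\eps)\lfloor (n-q_1(\omega))/M\rfloor$), since you explicitly borrow the extra $\eps/2$ of slack to swallow both the $j_0(\omega)$ loss and the boundary discrepancy between $N$ and $\lfloor n/M\rfloor$.
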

\begin{proof}
By Lemma \ref{lem:erg} there exists $q_3(\omega) \ge q_1(\omega)$ such that for all  $n\ge q_3(\omega)$ we have
that 
\[
\sum_{k=0}^{\lfloor n/M\rfloor-1}{\bf 1}_{\{q_1\le M\}}\circ \sigma^{Mk+r(\omega)}(\omega) \ge (1-2\eps) \Big\lfloor\frac{n}{M} \Big\rfloor. 
\]
By definition of the $t_i$'s this implies $s\ge (1-2\eps) \Big\lfloor\frac{n-q_1(\omega)}{M} \Big\rfloor$.
\end{proof}

Let $n$ be sufficiently large integer (larger that $q_3$ in the previous lemma). Let $s=\max\{i\in\NN\mid t_i\le n\}$ Then we have 
\[
P^n_\omega=P^{n-t_{s}}_{\sigma^{t_{s}}\omega}\circ P_{\sigma^{t_{s-1}}\omega}^{t_s-t_{s-1}}\circ\dots\circ P_{\sigma^{t_1}\omega}^{t_2-t_1}\circ P_\omega^{t_1}.
\]
By the definition of $t_i$'s we have  $t_i(\omega)-t_{i-1}(\omega)\ge q_1(\sigma^{t_{i-1}}\omega)$. Using Proposition \ref{prop:key} repeatedly, we obtain 
\begin{equation}\label{eq:contractions}
P^{t_{s}}_\omega\C^\omega(a,b,c)\subset\C^{\sigma^{t_{s}}\omega}( a\kappa^{(1-2\eps)\frac{n}{M}}, b\kappa^{(1-2\eps)\frac{n}{M}}, c\kappa^{(1-2\eps)\frac{n}{M}}).
\end{equation}

Notice that $P^{n-t_{s}}_{\sigma^{t_{s}}\omega}$ does not necessarily preserve the cone, but this can be dealt with separately. We are now in a position to study the `operational' correlations on the random tower, first using the measures $\{m_\omega\}$ in Lemma \ref{lem:opcorm}. We later we deduce the same type of correlations on the tower using Lebesgue measure $\{\leb_\omega\}$.
\subsection{Decay of operational correlations}
We start with a standard lemma (see \cite{Liv2} for example) that will be used to relate the correlation the underlying system with the contraction of the corresponding transfer operator in the Hilbert metric.
\begin{lemma}\label{lem:inftyP}
For $\varphi_\omega,\psi_\omega \in C^\omega_+$ with $\|\varphi_\omega\|_1=1$ and $\|\psi_\omega\|_1=1$ we have 
\[\left\|\frac{\varphi_\omega}{\psi_\omega}-1\right\|_\infty
\le \left(\exp{\Theta^+\left( \varphi_\omega,\psi \right)}-1\right),
\]
\end{lemma}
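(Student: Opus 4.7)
The plan is to unpack the definition $\Theta^+(\vp_\omega,\psi_\omega)=\log\bigl(\tfrac{\sup\vp_\omega}{\inf\psi_\omega}\cdot\tfrac{\sup\psi_\omega}{\inf\vp_\omega}\bigr)$ and exploit the normalization $\|\vp_\omega\|_1=\|\psi_\omega\|_1=1$ to pin down the ratio $\vp_\omega/\psi_\omega$ near the value $1$. Concretely, writing $A=\sup(\vp_\omega/\psi_\omega)$ and $B=\inf(\vp_\omega/\psi_\omega)$, the statement $\|\vp_\omega/\psi_\omega-1\|_\infty\le e^{\Theta^+(\vp_\omega,\psi_\omega)}-1$ reduces to $\max(A-1,\,1-B)\le A/B-1$, combined with the pointwise bound $A/B\le e^{\Theta^+(\vp_\omega,\psi_\omega)}$.

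First I would observe that since $\int(\vp_\omega-\psi_\omega)\,dm_\omega=0$ (both integrals equal $1$) and $\psi_\omega>0$, the function $\vp_\omega/\psi_\omega-1$ must change sign, or at least attain values on both sides of zero. Consequently $B\le 1\le A$. The precise measure used in $\|\cdot\|_1$ is not relevant for this step; what matters is only that the two integrals agree, giving the sign-change conclusion for $\vp_\omega-\psi_\omega$.

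Next I would control $A$ and $B$ in terms of sup's and inf's of $\vp_\omega$ and $\psi_\omega$ separately: clearly $A\le\sup\vp_\omega/\inf\psi_\omega$ and $1/B\le\sup\psi_\omega/\inf\vp_\omega$, so multiplying yields $A/B\le e^{\Theta^+(\vp_\omega,\psi_\omega)}$. Finally, since $B\le 1$, we have $1/B\ge 1$ and therefore
\[
A-1\le \frac{A}{B}-1,\qquad 1-B\le \frac{1-B}{B}=\frac{1}{B}-1\le \frac{A}{B}-1,
\]
because $A\ge 1$. Combining these with the previous step gives
\[
\Bigl\|\frac{\vp_\omega}{\psi_\omega}-1\Bigr\|_\infty=\max(A-1,1-B)\le \frac{A}{B}-1\le e^{\Theta^+(\vp_\omega,\psi_\omega)}-1,
\]
which is the desired estimate.

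There is no real obstacle here; this is a short elementary inequality on the positive cone. The only point that requires a moment of care is identifying that the $L^1$ normalization forces $\vp_\omega/\psi_\omega$ to straddle the value $1$, since without this the simple bound $A/B-1$ on $\max(A-1,1-B)$ would fail. Everything else is manipulation of sups and infs.
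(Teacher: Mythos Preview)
Your proof is correct and follows essentially the same approach as the paper: both use the $L^1$ normalization to force the ratio $\vp_\omega/\psi_\omega$ to straddle $1$, and then bound its oscillation by $e^{\Theta^+}$. The only cosmetic difference is that the paper works directly with the two-sided bound $e^{-\Theta^+}\le \vp_\omega/\psi_\omega\le e^{\Theta^+}$ (obtained from the identity $\tfrac{\vp(x)}{\psi(x)}=\tfrac{\vp(x)\psi(y)}{\vp(y)\psi(x)}\cdot\tfrac{\vp(y)}{\psi(y)}$ and the existence of points where $\vp\ge\psi$ and $\vp\le\psi$), whereas you route the same estimate through $A/B$; the content is identical.
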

\begin{proof}
Recall that the definition of $\Theta^+$ implies 
\[
\Theta^+(\vp_\omega, \psi_\omega)=\log\left(\sup_{x,y,\in\Delta_\omega}\frac{\vp_\omega(x)\psi_\omega(y)}{\vp_\omega(y)\psi_\omega(x)}\right).
\]
Thus, using the identity 
\[
\frac{\varphi_\omega(x)}{\psi_\omega(x)}=\frac{\varphi_\omega(x)\psi_\omega(y)}{\vp_\omega(y)\psi_\omega(x)}\cdot\frac{\vp_\omega(y)}{\psi_\omega(y)},
\]
we obtain 
\[
e^{-\Theta^+(\vp_\omega, \psi_\omega)}\frac{\vp_\omega(y)}{\psi_\omega(y)}\le \frac{\vp_\omega(x)}{\psi_\omega(x)}\le e^{\Theta^+(\vp_\omega, \psi_\omega)}\frac{\vp_\omega(y)}{\psi_\omega(y)}.
\]
Since $\|\psi\|_1=\|\vp\|_1$ there exists two points $y_1, y_2 \in\Delta_\omega$ such that $\psi(y_1)\ge \vp(y_1)$ and $\vp(y_2)\ge \psi(y_2)$, so we obtain 
\[
e^{-\Theta^+(\vp_\omega, \psi_\omega)}\le \frac{\vp_\omega(x)}{\psi_\omega(x)}\le e^{\Theta^+(\vp_\omega, \psi_\omega)},
\]
which completes the proof. 
\end{proof}
We first study the correlations for observables in the family of cones.
\begin{lemma}\label{lem:opcorm}
Let $\tilde\varphi_\omega\in\C^\omega(a,b,c)$, $\varphi_\omega=v_\omega\tilde\varphi_\omega$ and $\psi_\omega\in L^{\infty}$. There exists a random variable $C_\omega(\varphi,\psi)>0$ finite $\PP$-almost everywhere and a constant $0<\rho<1$ such that for every $n\in\NN$
$$\left| \int_{\Delta_{\omega}} (\psi_{\sigma^n\omega}\circ F^{n}_\omega)\varphi_\omega d\leb_{\omega}-\int_{\Delta_{\sigma^n\omega}} \psi_{\sigma^n\omega} d\mu_{\sigma^n\omega}\int_{\Delta_\omega} \varphi_\omega d\leb_\omega\right|
\le C_\omega(\varphi,\psi)\rho^n.$$
\end{lemma}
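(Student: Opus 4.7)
The plan is first to translate the left-hand side into an expression involving iterates of the transfer operator $P_\omega$, and then to exploit the cone contraction supplied by Proposition \ref{prop:key} and Lemma \ref{le:finite_diam} along the ergodically selected good times given by Lemma \ref{lem:erg2}. Using $\varphi_\omega=v_\omega\tilde\varphi_\omega$, the definition of $m_\omega$, and the duality \eqref{eq:duality},
\[
\int_{\Delta_\omega}(\psi_{\sigma^n\omega}\circ F_\omega^n)\varphi_\omega\,d\leb_\omega=\int_{\Delta_{\sigma^n\omega}}\psi_{\sigma^n\omega}\,P_\omega^n\tilde\varphi_\omega\,dm_{\sigma^n\omega},
\]
and since $h_\omega=v_\omega\tilde h_\omega$ one has $d\mu_\omega=\tilde h_\omega\,dm_\omega$. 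After normalizing so that $\int\tilde\varphi_\omega\,dm_\omega=1$, the quantity to estimate is thus bounded by $|\psi_{\sigma^n\omega}|_\infty\int|P_\omega^n\tilde\varphi_\omega-\tilde h_{\sigma^n\omega}|\,dm_{\sigma^n\omega}$.

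To control this $L^1$-distance, I would first verify that $\tilde h_\omega\in\C^\omega(a,b,c)$ for $\PP$-almost every $\omega$; this follows from the two-sided bound $1/C<\tilde h_\omega<C$ of Theorem \ref{thm:main0} together with the regularity inherited from (P2), possibly after enlarging $a,b,c$. Both $P_\omega^n\tilde\varphi_\omega$ and $\tilde h_{\sigma^n\omega}=P_\omega^n\tilde h_\omega$ are then positive elements of the cone at time $n$ with equal $m_{\sigma^n\omega}$-mass, so Lemma \ref{lem:inftyP}, combined with the standard domination of $\Theta^+$ by the Hilbert metric $\Theta$ of any sub-cone of the positive cone, yields
\[
\int|P_\omega^n\tilde\varphi_\omega-\tilde h_{\sigma^n\omega}|\,dm_{\sigma^n\omega}\le e^{\Theta(P_\omega^n\tilde\varphi_\omega,\,\tilde h_{\sigma^n\omega})}-1,
\]
where $\Theta$ is the Hilbert metric associated to $\C^{\sigma^n\omega}(a,b,c)$.

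It remains to show that this Hilbert distance decays exponentially in $n$. Letting $s=s(\omega,n)$ be the largest index with $t_s(\omega)\le n$, I would use the decomposition
\[
P_\omega^n=P_{\sigma^{t_s}\omega}^{n-t_s}\circ P_{\sigma^{t_{s-1}}\omega}^{t_s-t_{s-1}}\circ\dots\circ P_\omega^{t_1}.
\]
For each inner factor, the construction of the $t_i$ gives $t_{i+1}-t_i\ge M\ge q_1(\sigma^{t_i}\omega)$, so by Proposition \ref{prop:key} each of $P_\omega^{t_1}$ and $P_{\sigma^{t_i}\omega}^{t_{i+1}-t_i}$ with $1\le i<s$ maps $\C^\omega(a,b,c)$ into a strictly nested subcone and hence contracts the Hilbert metric by the factor $\tau:=\tanh(d/4)<1$, where $d$ is the diameter from Lemma \ref{le:finite_diam}. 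The leftover operator $P_{\sigma^{t_s}\omega}^{n-t_s}$ is absorbed via the $L^1(dm)$-contraction applied to the signed density $P_\omega^{t_s}\tilde\varphi_\omega-\tilde h_{\sigma^{t_s}\omega}$, using the equivariance $P_{\sigma^{t_s}\omega}^{n-t_s}\tilde h_{\sigma^{t_s}\omega}=\tilde h_{\sigma^n\omega}$. Iterating the contraction $s$ times and invoking Lemma \ref{lem:erg2}, which gives $s\ge\lfloor n/M\rfloor(1-2\eps)$ whenever $n\ge q_3(\omega)$, produces the rate $\rho:=\tau^{(1-2\eps)/M}\in(0,1)$ independent of $\omega$ for all such $n$; the finitely many earlier $n$ are handled by the trivial bound $2|\psi_{\sigma^n\omega}|_\infty\int\varphi_\omega\,d\leb_\omega$, absorbed into the prefactor via a factor $\rho^{-q_3(\omega)}$. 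The hard part, and the source of the $\omega$-dependence of $C_\omega(\varphi,\psi)$, is precisely that $q_3(\omega)$ is only $\PP$-almost everywhere finite: it is produced by the ergodic theorem with no uniform bound available, reflecting the assumption that $(\sigma,\PP)$ is ergodic but not necessarily mixing.
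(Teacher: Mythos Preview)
Your approach is essentially the paper's: reduce to an $L^1$-distance of densities, invoke Lemma~\ref{lem:inftyP} together with $\Theta^+\le\Theta$, decompose $P_\omega^n$ along the good times $t_i$, contract by $\tanh(d/4)$ at each step, count contractions via Lemma~\ref{lem:erg2}, and absorb small $n$ into $C_\omega$ through a factor $\rho^{-q_3(\omega)}$.

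There is, however, one genuine inconsistency you should repair. You first assert that $P_\omega^n\tilde\varphi_\omega$ lies in $\C^{\sigma^n\omega}(a,b,c)$ and write the Hilbert-metric bound
\[
\int|P_\omega^n\tilde\varphi_\omega-\tilde h_{\sigma^n\omega}|\,dm_{\sigma^n\omega}\le e^{\Theta(P_\omega^n\tilde\varphi_\omega,\,\tilde h_{\sigma^n\omega})}-1
\]
at time $n$, but later say the leftover factor $P_{\sigma^{t_s}\omega}^{n-t_s}$ is handled by $L^1(dm)$-contraction. These are two different strategies, and only the second is justified: since $n-t_s$ need not satisfy $n-t_s\ge q_1(\sigma^{t_s}\omega)$, Proposition~\ref{prop:key} gives no information about cone membership at time $n$; the paper explicitly flags that $P^{n-t_s}_{\sigma^{t_s}\omega}$ need not preserve the cone. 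The correct order, and what the paper does, is to apply the $L^1$-contraction first (the paper uses $\mL_\omega$ in $L^1(d\leb)$, your $P_\omega$ in $L^1(dm)$ works equally well), reducing the integral at time $n$ to the one at time $t_s$, and only then pass to the Hilbert metric on $\C^{\sigma^{t_s}\omega}(a,b,c)$, where membership of both $P_\omega^{t_s}\tilde\varphi_\omega$ and $\tilde h_{\sigma^{t_s}\omega}$ is guaranteed by the construction of the $t_i$'s.
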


\begin{proof}
We first prove the above for $n\ge q_3(\omega)$, where $q_3$ is the same random variable as Lemma \ref{lem:erg2}. Assume without loss of generality that  $\int_{\Delta_\omega}\varphi_\omega d\leb_\omega=1$. Consequently, the operational correlation in the lemma is bounded above by:
$$\|\psi_{\sigma^n\omega}\|_{\infty}\cdot\int_{\Delta_{\sigma^n\omega}}\left|\mL_\omega^n\varphi_\omega-h_{\sigma^n\omega}\right|d\leb_{\sigma^n\omega}.$$
Therefore to conclude the lemma, we estimate the integral in the above expression.
Using the fact that $\mL_\omega$ is a weak contraction in $L^1(\Delta_\omega)$ and equation \eqref{eq:contractions}, we have

\begin{equation}
    \begin{split}
\int_{\Delta_{\sigma^n\omega}}\left|\mL_\omega^n\varphi_\omega-h_{\sigma^n\omega}\right|d\leb_{\sigma^n\omega}&=\int_{\Delta_{\sigma^n\omega}}\left|\mL_\omega^{n-t_s+t_s}\varphi_\omega -\mL_\omega^{n-t_s+t_s} h_{\omega}\right|d\leb_{\sigma^n_\omega}\\
&\le \int_{\Delta_{\sigma^{t_s}_\omega}}\left|\mL_\omega^{t_s}\varphi_\omega-\mL_\omega^{t_s} h_{\omega}\right|d\leb_{\sigma^{t_s}\omega}\\
&\le \int_{\Delta_{\sigma^{t_s}\omega}}\left|\mL_\omega^{t_s}\varphi_\omega-\mL_\omega^{t_s} h_{\omega}\right|\frac{v_{\sigma^{t_s}\omega}}{v_{\sigma^{t_s}\omega}}d\leb_{\sigma^{t_s}\omega}\\
&=\int_{\Delta_{\sigma^{t_s}\omega}}\left|P_\omega^{t_s}\tilde\varphi_\omega-\tilde h_{\sigma^{t_s}\omega}\right|dm_{\sigma^{t_s}_\omega}\\
&\le\|\tilde h_{\sigma^{t_s}\omega}\|_\infty\cdot\left\|\frac{P_\omega^{t_s}\tilde\varphi_\omega}{\tilde h_{\sigma^{t_s}\omega}}-1\right\|_\infty\\
&\le C \left(\exp\{\Theta^+\left( P_\omega^{t_s}\tilde \varphi_\omega,\tilde h_{\sigma^{t_s}\omega}\right)\}-1\right)\\
&\le C\left(\exp\{\Theta\left( P_\omega^{t_s}\tilde \varphi_\omega,\tilde h_{\sigma^{t_s}\omega}\right)\}-1\right)\\
&\le C\left(\exp\{\kappa^{(1-2\eps){\Big\lfloor\frac{n-q_1(\omega)}{M}}\Big\rfloor} \}-1\right)\le C \rho^n.
\end{split}
\end{equation}
where we used Lemma \ref{lem:inftyP} to pass from the $L^\infty$ norm to the projective norm. 
Now for $n\ge 0$ we have
$$\left| \int_{\Delta_{\omega}} (\psi_{\sigma^n\omega}\circ F^{n}_\omega)\varphi_\omega d\leb_{\omega}-\int_{\Delta_{\sigma^n\omega}} \psi_{\sigma^n\omega} d\mu_{\sigma^n\omega}\int_{\Delta_\omega} \varphi_\omega d\leb_\omega\right|
\le C_\omega(\varphi,\psi)\rho^n,$$
where $C_\omega=2C\rho^{-q_3(\omega)}\cdot\|\psi_{\sigma^n\omega}\|_{\infty}$.

\end{proof}

Below we show that every $\psi\in\mF$ can be modified without affecting the correlations, so that modified observable is in $\C^\omega_{a,b,c}$. Hence, to obtain decay of correlations and completes the proof of Theorem \ref{thm:main1}, we Lemma \ref{lem:opcorm} above and the following lemma.

\begin{lemma}%\label{lem:shift}
For every $\varphi\in \mF$ let $K_\omega=\int\varphi_\omega d\leb_{\omega}$. There exists $C_\varphi>0$ such that 
\[
\tilde\varphi_\omega=\frac{1}{v_\omega}\frac{\varphi_\omega+C_\varphi h_\omega}{K_\omega+C_\varphi}\in\C^{\omega}_{a,b,c},
\]
for almost every $\omega\in\Omega$ and for all $a>1$, $b>\sup_\omega|\tilde h_\omega|_h$, $c>\sup_\omega\| \tilde h_\omega\|_\infty$.
\end{lemma}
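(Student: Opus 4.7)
The plan is to exploit the fact that the normalised ``reference'' direction $\tilde h_\omega = h_\omega/v_\omega$ already sits deep inside the cone, and to show that the perturbation contributed by $\varphi_\omega$ becomes negligible after one divides by $K_\omega + C_\varphi$. Concretely, I would decompose
\[
\tilde\varphi_\omega \;=\; \frac{1}{K_\omega + C_\varphi}\cdot\frac{\varphi_\omega}{v_\omega} \;+\; \frac{C_\varphi}{K_\omega + C_\varphi}\,\tilde h_\omega,
\]
and treat each cone-defining inequality separately.

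First I would fix the normalisation: since $dm_\omega = v_\omega\, d\lambda_\omega$ and $\int h_\omega\, d\lambda_\omega = 1$, a direct computation gives $\int \tilde\varphi_\omega\, dm_\omega = 1$. Next, I would record the uniform bounds that will be used throughout: $1/C \le h_\omega \le C$ for $\mathbb{P}$-almost every $\omega$ (Theorem \ref{thm:main0}), $v_\omega \ge 1$, the finite tail $\esssup_\omega \lambda_\omega(\Delta_\omega) < \infty$ coming from (P5), and the uniform bound $M:=\esssup_\omega \|\varphi_\omega\|_{\mathcal F} < \infty$ from $\varphi \in \mathcal F$. In particular $|K_\omega| \le CM$ uniformly, so $K_\omega + C_\varphi > 0$ once $C_\varphi$ is larger than a $\varphi$-dependent constant.

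Then I would check that $\tilde h_\omega$ by itself belongs to $\mathcal C^{\omega}_{1,\,|\tilde h_\omega|_h,\,\|\tilde h_\omega\|_\infty}$: the first condition holds with constant $1$ since $\int_{A_\omega}\tilde h_\omega\, dm_\omega = \mu_\omega(A_\omega)$ and $\int\tilde h_\omega\, dm_\omega = 1$; the second and third conditions are immediate from the definitions of $|\tilde h_\omega|_h$ and $\|\tilde h_\omega\|_\infty$. Now, using that $v_\omega$ is constant on every partition element $\Delta^j_{\omega,\ell}$ (so $|\varphi_\omega/v_\omega|_h = |\varphi_\omega|_h/v_\omega \le M$) and the crude bounds $|\varphi_\omega/v_\omega| \le M$, $\int_{A_\omega}|\varphi_\omega|/v_\omega\, d\lambda_\omega \le CM\,\mu_\omega(A_\omega)$, each of the three cone inequalities for $\tilde\varphi_\omega$ reduces to one of the schematic form
\[
\text{(quantity for }\tilde\varphi_\omega\text{)} \;\le\; \frac{C'M}{K_\omega + C_\varphi} \;+\; \frac{C_\varphi}{K_\omega + C_\varphi}\,\text{(quantity for }\tilde h_\omega\text{)}.
\]
As $C_\varphi \to \infty$ the first summand tends to $0$ and the second to the $\tilde h_\omega$ value. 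Since the hypotheses give $a>1$, $b>\sup_\omega|\tilde h_\omega|_h$ and $c>\sup_\omega \|\tilde h_\omega\|_\infty$ with a strict gap, one can pick a single $C_\varphi$, depending only on $\varphi$ and the uniform constants above, so that all three inequalities are verified for $\mathbb{P}$-almost every $\omega$.

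The only technical point to watch is that the choice of $C_\varphi$ must be $\omega$-independent. This is precisely where the uniformity of all the ingredients enters: $M<\infty$ by definition of $\mathcal F$, the $\omega$-independent bounds on $h_\omega$ and $\tilde h_\omega$ provided by Theorem~\ref{thm:main0} and the hypothesis on $b,c$, and the uniform tail estimate (P5) controlling $\lambda_\omega(\Delta_\omega)$. Once this uniformity is secured, the three cone conditions follow by taking $C_\varphi$ large and the lemma is proved.
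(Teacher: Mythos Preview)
Your proposal is correct and follows essentially the same approach as the paper: both verify the cone conditions one by one and show they all hold once $C_\varphi$ is taken sufficiently large, using only $\omega$-uniform bounds (the $\mathcal F$-norm of $\varphi$, the two-sided bound on $h_\omega$, and the tail estimate (P5)). The paper does this by rearranging each inequality into an explicit lower bound on $C_\varphi$, whereas you phrase the same computation as a perturbation of $\tilde h_\omega$ by a term of size $O(1/(K_\omega+C_\varphi))$; these are equivalent. One minor omission: the paper also checks $\tilde\varphi_\omega\ge 0$ explicitly, but this is immediate from your bounds once $C_\varphi \ge C\,\esssup_\omega|\varphi_\omega|_\infty$.
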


\begin{proof}
By definition we have $\int \tilde\varphi_\omega dm_\omega=1$. We will choose $C_\varphi$ satisfying the cone conditions. 
$\tilde \varphi\ge 0$ is satisfied when $C_\varphi\ge \sup_{x\in\Delta_\omega}|\varphi_\omega(x)|/h_\omega(x)$. 
We start with the cone condition
\[
\frac{1}{\mu_\omega(A_\omega)}\int_{A_\omega}\tilde\varphi_\omega dm_\omega\le a \int\tilde\varphi_\omega d m_\omega.
\]
which is equivalent to 
\[
\frac{1}{\mu_\omega(A_\omega)}\cdot\frac{1}{K_\omega+C_\varphi}
\int_{A_\omega}(\varphi_\omega+C_\varphi h_\omega)d\leb_\omega \le a,
\]
which reduces to 
\[
\frac{1}{\mu_\omega(A_\omega)}\left(\int_{A_\omega} \varphi_\omega d\leb_\omega+C_\varphi\right)\le a(K_\omega+C_\varphi).
\]
and 
\[
C_\varphi\ge \frac{\frac{1}{\mu(A_\omega)}\int_{A_\omega}\varphi_\omega d\leb_\omega-aK_\omega}{a-\frac{1}{\mu(A_\omega)}\int_{A_\omega}h_\omega d\leb_\omega}, \text{ for all } A_\omega\in\mP_\omega.
\]
The second cone condition states that 
\[
|\tilde\varphi_\omega|_h\le b\int\tilde\varphi_\omega d m_\omega
\]
which holds if we choose $C_\varphi$ satisfying the following inequality
\[
|\varphi_\omega/ v_\omega|_h+C_\varphi |h_\omega/v_\omega|_h\le b(K_\omega+C_\varphi).
\]
Equivalently, 
\[
C_\psi\ge \frac{|\varphi_\omega/v_\omega|_h-bK_\omega}{b-|h_\omega/ v_\omega|_h}.
\]
The third cone condition is on $\mathcal G^c_\omega$
\[
|\tilde\varphi_\omega(x)|\le c\int\tilde\varphi_\omega(x)d m_\omega, x\in\mathcal G^c_\omega,
\]
which is equivalent to 
\[\left|\frac{\varphi_\omega(x)+C_\varphi h_\omega(x)}{v_\omega(x)}\right|\le c(K_\omega+C_\varphi).
\]
The inequality is satisfied if we choose $C_\phi>0$ using 
$$|\varphi_\omega(x)/ v(x)|+C_\varphi |h_\omega(x)/ v_\omega(x)|\le c(K_\omega+C_\varphi).$$ 
Consequently, if we choose 
\[
C_\varphi\ge \frac{\|\varphi_\omega/ v_\omega\|_\infty-cK_\omega}{c-\|h_\omega/ v_\omega\|_\infty}.
\]
the above cone condition is satisfied. 
\end{proof}
\bibliographystyle{amsplain}

\end{document}